\def\N{\textrm{I\kern-0.21emN}}
\newcommand{\R} {\mathbb{R}}
\DeclareMathOperator{\diag}{diag}
\renewcommand{\geq}{\geqslant}
\renewcommand{\leq}{\leqslant}
\newtheorem{proposition}{Proposition}
\newtheorem{definition}{Definition}
\newtheorem{lemma}{Lemma}
\theoremstyle{definition}\newtheorem{remark}{Remark}
\title{Redundancy implies robustness for bang-bang strategies}
\author{Antoine Olivier \thanks{Sorbonne Universit\'es, UPMC Univ Paris 06, CNRS UMR 7598, Laboratoire Jacques-Louis Lions, F-75005, Paris, France.} \thanks{CNES, Direction des lanceurs, 52 rue Jacques Hillairet, 75612 Paris cedex, France.} \footnotemark[4] \; Thomas Haberkorn \thanks{Universit\'e d'Orl\'eans, Laboratoire MAPMO, Rue de Chartres, B. P. 6759 - 45067 Orl\'eans cedex 2, France.} \footnotemark[4] \; Emmanuel Tr\'elat \footnotemark[1] \footnotemark[4]  \\
\'Eric Bourgeois \footnotemark[2] \footnotemark[4] \; David-Alexis Handschuh \footnotemark[2]
\thanks{e-mail: \href{mailto:olivier@ljll.math.upmc.fr}{olivier@ljll.math.upmc.fr} (corresponding author), \href{mailto:emmanuel.trelat@upmc.fr}{emmanuel.trelat@upmc.fr},
\href{mailto:thomas.haberkorn@univ-orleans.fr}{thomas.haberkorn@univ-orleans.fr}, 
\href{mailto:eric.bourgeois@cnes.fr}{eric.bourgeois@cnes.fr},
\href{mailto:david-alexis.handschuh@cnes.fr}{david-alexis.handschuh@cnes.fr}.} }
\date{}
\begin{document}

\maketitle

\begin{abstract}
We develop in this paper a method ensuring robustness properties to bang-bang strategies, for general nonlinear control systems. Our main idea is to add bang arcs in the form of needle-like variations of the control. With such bang-bang controls having additional degrees of freedom, steering the control system to some given target amounts to solving an overdetermined nonlinear shooting problem, what we do by developing a least-square approach. In turn, we design a criterion to measure the quality of robustness of the bang-bang strategy, based on the singular values of the end-point mapping, and which we optimize. Our approach thus shows that redundancy implies robustness, and we show how to achieve some compromises in practice, by applying it to the attitude control of a 3d rigid body.
\end{abstract}

\section{Introduction}

\subsection{Overview of the method}
\label{overview}
To introduce the subject, we explain our approach on the control problem consisting of steering the finite-dimensional nonlinear control system 
\begin{equation}\label{sys_intro}
\dot x(t) = f(t,x(t),u(t)),
\end{equation}
from a given $x(0)=x_0$ to the target point $x(t_f)=x_f$, with a scalar control $u$ that can only switch between two values, say $0$ and $1$. The general method, as well as all assumptions, will be written in details in a further section.

Let $E(x_0,t_f,u) = x(t_f)$ be the end-point mapping, where $x(\cdot)$ is the solution of \eqref{sys_intro} starting at $x(0)=x_0$ and associated with the control $u$. One aims at finding a bang-bang control $u$, defined on $[0,t_f]$ for some final time $t_f>0$, such that $E(x_0,t_f,u) = x_f$.

Many problems impose to implement only bang-bang controls, i.e., controls saturating the constraints but not taking any intermediate value. These are problems where only external actions of the kind on/off can be applied to the system.

Of course, such bang-bang controls can usually be designed by using optimal control theory (see \cite{lee1967foundations,pontryagin1987,Trelat1}). 
For instance, solving a minimal time control problem, or a minimal $L^1$ norm as in \cite{Caponigro2015}, is in general a good way to design bang-bang control strategies. However, due to their optimality status, such controls often suffer from a lack of robustness with respect to uncertainties, model errors, deviations from the target. Moreover, when the Pontryagin maximum principle yields bang-bang controls, such controls have in general a minimal number of switchings: in dimension $3$ for instance, it is proved in \cite{Krener,Kupka,Schattler} (see also \cite{BonnardChyba,Bonnard2005,Trelat2012} for more details on this issue) that, locally, minimal time trajectories of single-input control-affine systems have generically two switchings. Taking into account the free final time, this makes three degrees of freedom, which is the minimal number to generically make the trajectory reach a target point in $\R^3$, i.e., to solve three (nonlinear) equations.

In these conditions, a natural idea is to add redundancy to such bang-bang strategies, by enforcing the control to switch more times than necessary. These additional switching times are introduced by \emph{needle-like variations}, as in the classical proof of the Pontryagin maximum principle (see \cite{lee1967foundations,pontryagin1987}).

We recall that a needle-like variation $\pi_1=(t_1,\delta t_1,u_1)$ of a given control $u$ is the perturbation $u_{\pi_1}$ of the control $u$ given by
\begin{equation} \label{eq_needle}
u_{\pi_1}(t) = \left\{
\begin{array}{rcl}
u_1  & \textrm{if} & t\in [t_1,t_1+\delta t_1], \\
 u(t) & \textrm{otherwise,}&
\end{array}\right.
\end{equation}
where $t_1\in [0,t_f]$ is the time at which the spike variation is introduced, $\delta t_1$ is a real number of small absolute value that stands for the duration of the variation, and $u_1 \in [0,1]$ is some arbitrary element of the set of values of controls. When $\delta t_1<0$, one replaces the interval $[t_1,t_1+\delta t_1]$ with $[t_1+\delta t_1,t_1]$ in \eqref{eq_needle}.
It is well known that, if $\vert\delta t_1\vert$ is small enough, the control $u_{\pi_1}$ is admissible (that is, the associated trajectory solution of \eqref{sys_intro} is well-defined on $[0,t_f]$) and generates a trajectory $x_{\pi_1}(\cdot)$, which can be viewed as a perturbation of the nominal trajectory $x(\cdot)$ associated with the control $u$, and which steers the control system to the final point
\begin{equation}\label{eq_var}
E(x_0,t_f,u_{\pi_1}) = E(x_0,t_f,u) + \vert \delta t_1 \vert\, v_{\pi_1}(t_f) + o(\delta t_1),
\end{equation}
where the so-called variation vector $v_{\pi_1}(\cdot)$ is the solution of some Cauchy problem related to a linearized system along $x(\cdot)$ (see \cite{lee1967foundations,pontryagin1987,Silva2010} and Proposition \ref{prop_differentiability_epm}).
Recall that the \emph{first Pontryagin cone} $K(t_f)$ is the smallest closed convex cone containing all variation vectors $v_{\pi_1}(t_f)$; it serves as a local convex estimate of the set of reachable points at time $t_f$ (with initial point $x_0$). 


\begin{figure}[h]
\begin{center}
	\includegraphics[scale=1.4]{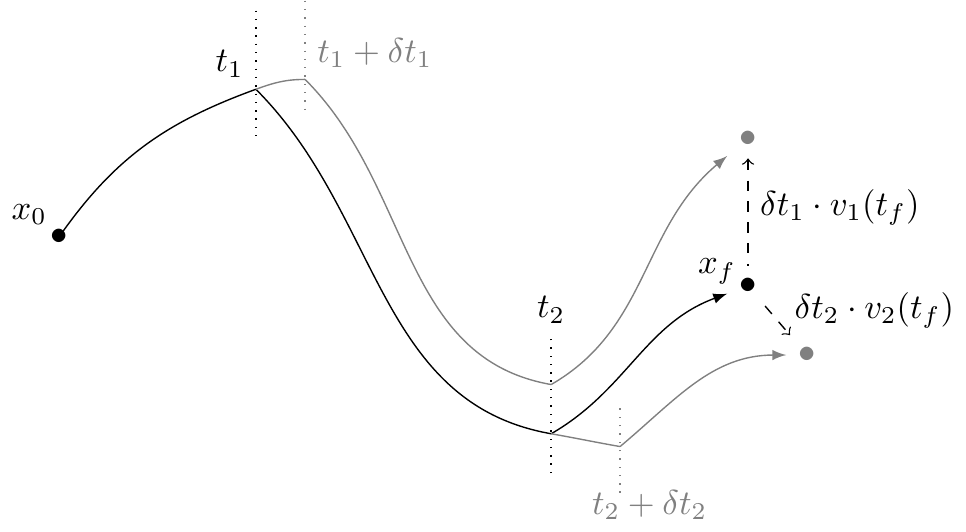}
  	\caption{Changing the switching times induces a displacement at the final time.}
  	\label{fig_diff_epm}
\end{center}
\end{figure}

Assume that the nominal control $u$, which steers the system from $x_0$ to the target point $x_f$, is bang-bang and switches $N$ times between the extreme values $0$ and $1$ over the time interval $[0,t_f]$. We denote by $\mathcal{T} = (t_1, \ldots, t_N)$ the vector consisting of its switching times $0<t_1<\cdots<t_N<t_f$. Then the control $u$ can equivalently be represented by the vector $\mathcal{T}$, provided one makes precise the value of $u(t)$ for $t\in(0,t_1)$. One can also add new switching times: for instance if $u(t)=0$ for $t\in(0,t_1)$, given any $s_1\in(0,t_1)$, the needle-like variation $\pi_1=(s_1,\delta s_1,1)$ (with $\vert\delta s_1\vert$ small enough) is a bang-bang control having two new switching times at $s_1$ and $s_1+\delta s_1$.

In what follows, we designate a bang-bang control either by $u$ or by the set $\mathcal{T}=(t_1, \ldots, t_N)$ of its switching times. This is with a slight abuse because we should also specify the value of $u$ along the first bang arc. But we will be more precise, rigorous and general in a further section.
The end-point mapping is then reduced to the switching times, and one has $E(x_0, t_f, \mathcal{T}) = x_f$.
A variation $\delta\mathcal{T} = (\delta t_1, \ldots, \delta t_N)$ of the switching times generates $N$ variation vectors $(v_{1}(t_f), \ldots, v_{N}(t_f))$, and the corresponding bang-bang trajectory reaches at time $t_f$ the point (see Figure \ref{fig_diff_epm}, where two variations vectors are displayed, for two switching times $t_1$ and $t_2$)
\[
E\left(x_0, t_f, \mathcal{T} + \delta\mathcal{T}\right) = x_f + \delta t_1\cdot v_{1}(t_f) + \cdots + \delta t_N \cdot v_{N}(t_f) + \mathrm{o}(\| \delta\mathcal{T} \|) .
\]
Therefore the end-point mapping $E$ is differentiable with respect to $\mathcal{T}$, and
\begin{equation}\label{eq_diff_epm_intro}
\frac{\partial E}{\partial\mathcal{T}}(x_0, t_f, \mathcal{T})\cdot \delta\mathcal{T} = \delta t_1\cdot v_{1}(t_f) + \cdots + \delta t_N \cdot v_{N}(t_f) .
\end{equation}
Notice that compared to \eqref{eq_var}, the absolute values disappear. We will prove this result in details further in the paper.
In particular, the range of this differential is the first Pontryagin cone $K(t_f)$ (see also \cite{Silva2010}).
Obviously, the more switching times (i.e., degrees of freedom), the more accurate the approximation of the reachable set. 

We now add \emph{redundant} switching times $(s_1, \ldots, s_\ell)$ for some $\ell \in \N$ in order to generate more degrees of freedom to solve the control problem
$$
E\left(x_0,t_f,(t_1, \ldots, t_N, s_1, \ldots, s_\ell)\right) = x_f .
$$
We order the times in the increasing order and we still denote by $\mathcal{T}$ the vector of all switching times.

\paragraph{Redundancy creates robustness.}
We will see further that these redundant switching times contribute to make the trajectory robust to external disturbances or model uncertainties, we will develop a method to tune the switching times in order to absorb these perturbations and steer the system to the desired target $x_f \in \mathbb{R}^n$.

Here, in this still informal introduction, we show how to use the additional switching times to make the system reach targets $x_f+\delta x_f$ in a neighborhood of $x_f$. The idea is to solve the nonlinear system of equations
\begin{equation*}
E(x_0,t_f,\mathcal{T}+\delta\mathcal{T}) = x_f +\delta x_f .
\end{equation*}
Using \eqref{eq_diff_epm_intro}, we propose to solve, at the first order,
\begin{equation}\label{eq_rec}
\frac{\partial E}{\partial\mathcal{T}}(x_0, t_f, \mathcal{T})\cdot \delta\mathcal{T} = \delta x_f ,
\end{equation}
which makes $n$ equations with $N+\ell$ degrees of freedom. 
We assume that $N+\ell$ is (possibly much) larger than $n$ and that the matrix in \eqref{eq_rec} is surjective. Then one can solve \eqref{eq_rec} by using the \emph{Moore-Penrose pseudo-inverse} $\left(\frac{\partial E}{\partial\mathcal{T}}\right)^{\dagger}$ of $\frac{\partial E}{\partial\mathcal{T}}$ (see \cite{Golub}, or see \cite{Beutler1,Beutler2} for a theory in infinite dimension), which yields the solution of minimal Euclidean norm
\begin{equation*}
\delta\mathcal{T} = \left(\frac{\partial E}{\partial\mathcal{T}}\right)^{\dagger} \cdot \delta x_f ,
\end{equation*}
and we have
\begin{equation}\label{eq_estimate_simple}
\left\| \delta\mathcal{T} \right\|_2 \leq \frac{\left\| \delta x_f \right\|_2}{\sigma_{min}} ,
\end{equation}
where $\sigma_{min}$ is the smallest positive singular value of $\frac{\partial E}{\partial\mathcal{T}}$. This estimate gives a natural measure for robustness, that we will generalize.

\medskip

The two main contributions of this paper are:
\begin{itemize}
\item the idea of adding redundant switching times in order to make a nominal bang-bang control more robust, while keeping it as being bang-bang;
\item the design of a practical tracking algorithm, consisting of solving an overdetermined nonlinear system by least-squares, thus identifying a robustness criterion that we optimize.
\end{itemize}
They are developed in a rigorous and general context in the core of the paper.

\subsection{State of the art on robust control design}
There is an immense literature on robust control theory, with many existing methods in order to efficiently control a system subjected to uncertainties and disturbances. Whereas there are many papers on $\mathcal{H}_2$ and $\mathcal{H}_{\infty}$ methods, except a few contributions in specific contexts, we are not aware of any general theory allowing one to tackle perturbations by using only bang-bang controls. This is the focus of this paper. 

Let us however shortly report on robustness methods when one is not bound to design bang-bang controls.
In \cite{Koh:1999}, a path-tracking algorithm with bang-bang controls is studied, for a double integrator and a wheeled robot. The technique relies heavily on the expression of the equations and does not apply to more general systems. In \cite{singh1994}, the authors build a robust minimal time control for spacecraft's attitude maneuvers by canceling the poles of some transfer function. A remarkable fact is that the robustified control presents more switchings than the minimal time control. In this case, the robustness is evaluated as the maximum amplitude on a Bode diagram (see also \cite{liu1992} and \cite{liu1993} for similar works). In \cite{You2000}, the authors observe that bang-bang controls are intrinsically not robust, and use pieces of singular trajectories (hence, not bang-bang) to overcome this issue.

In the $\mathcal{H}_2$ and $\mathcal{H}_{\infty}$ theories, control systems are often written in the frequency domain using the Laplace transform. For a transfer matrix $G(s)$, the two classical measures for performance are (see \cite{Doyle1989,zhou1996robust}) the $\mathcal{H}_2$ norm and the $\mathcal{H}_{\infty}$ norm respectively:
\begin{equation*}
\left\|G\right\|_{2} = \left(\frac{1}{2 \pi} \int^{+ \infty}_{- \infty}{\text{Trace}(G(j \omega) G(j \omega)^*) d\omega} \right)^{1/2}\qquad\textrm{and}\qquad
\left\|G\right\|_{\infty} = \sup_{\omega \in \R} \overline{\sigma}(G(j \omega)),
\end{equation*}
where $\overline{\sigma}(G)$ is the largest singular value of $G$.

In the linear quadratic theory, the question of optimal tracking has been widely addressed: given a reference trajectory $\xi(\cdot)$, we track it with a solution of some control system $\dot{x}(t) =f(x(t),u(t))$, minimizing a cost of the form 
\begin{equation*}
\int_0^{t_f} \left( \Vert x(t)-\xi(t)\Vert_W^2 + \Vert u(t)\Vert_U^2 \right)\, dt +  \Vert x(t_f)-\xi(t_f)\Vert_Q^2 ,
\end{equation*}
with weighted norms (see \cite{Anderson,Kwakernaak,Trelat1}).
The first term in the integral measures how close one is to the reference trajectory, the second one measures a $L^2$ norm of the control (energy), and the third one accounts for the distance at final time between the reference trajectory $\xi(\cdot)$ and $x(\cdot)$. Then, the control can be expressed as a feedback function of the error $x(t)-\xi(t)$, involving the solution of some Riccati equation. In \cite{AndreaNovel2013,Khalil}, the authors investigate the question of stabilizing around a slowly time-varying trajectory. They also introduce uncertainties on the model and study the sensitivity of the system to those uncertainties. In the case of the existence of a delay on the input, a feedback law is proposed. In \cite{Lin2007,Tan2009}, uncertainties $p$ are introduced in a linear system $\dot{x}(t) = A(p)x(t) + B u(t)$, and a tracking algorithm is suggested, under matching conditions on the uncertainties or not (see also \cite{Abdallah1991} for a survey on robust control for rigid robots).

In the late 1970's, $\mathcal{H}_{\infty}$ control theory developed. The control system is often described by a plant $G$ and a controller $K$. Then, the dependency of the error $z$ (to be minimized) on the input $v$ can be written as $z = F(G,K) v$.
The $\mathcal{H}_{\infty}$ control problem consists of finding the best controller $K$ such that the $\mathcal{H}_{\infty}$ norm of the matrix $F(G,K)$ is minimized: $\left\|F(G,K) \right\|_{\infty} = \sup_{\omega \in \R} \overline{\sigma}(F(G,K)(j \omega))$. It can be interpreted as the maximum gain from the input $v$ to the output $z$.
This criterion was introduced in order to deal with uncertainties on the model (on the plant $G$). In \cite{Zames1981}, the author introduced the notion and highlighted the connection with robustness. In \cite{Doyle1989}, a link is shown between the existence of such a controller and conditions on the solutions of two Riccati equations. Following a notion introduced in \cite{Gahinet1992}, the linear matrix inequality (LMI) approach was introduced in \cite{Gahinet1994}, and used in \cite{apkarian2004,noll2006} to solve the $\mathcal{H}_{\infty}$ synthesis. The Riccati equations are replaced with Riccati inequalities, whose set of solutions parameterizes the $\mathcal{H}_{\infty}$ controllers (see also \cite{boyd1994} for the use of LMIs in control theory). The papers \cite{Doyle1981,McFarlane1992,Xie1992} present design procedures in this context to elaborate the feedback controller $K$. In \cite{Ge1996}, the theory is extended to systems with parameters uncertainties and state delays, as well as in \cite{Xu2006}, with stochastic uncertainty.

In many optimal control problems, the application of the Pontryagin maximum principle leads to bang-bang control strategies, and the classical $\mathcal{H}_{2}$ and $\mathcal{H}_{\infty}$ theories were not designed for such a purpose. But the optimal trajectories are in general not robust. Adding needle-like variations is therefore a way to improve robustness, and is the main motivation of this paper. Of course, the method applies to any bang-bang control strategy, not necessarily optimal.

The approach that we suggest in this paper combines an off-line treatment of the control strategies, with a feedback algorithm based on the structure of the control. We emphasize here that this algorithm preserves the bang-bang structure of the control. It consists of applying a nominal control strategy (that needs to be computed \emph{a priori}), and adjusting it in real time, allowing one to track a nominal trajectory. The off-line method takes a solution of the control problem and makes it more robust by adding additional switching times (i.e.,  \emph{redundancy}), which can be seen as additional degrees of freedom. Note that our analysis is done in the state space, without needing to consider the frequency domain. A key ingredient to the method is the use of needle-like variations. 

\subsection{Structure of the paper}

The paper is organized as follows. In Section \ref{sec2}, we develop an algorithm to steer a perturbed system to the desired final point. The method is similar to the one presented in Section \ref{overview}, except that we need to consider a backward problem. Indeed, the final point is fixed, and perturbations appear all along the trajectory. Besides, our measure for robustness comes out naturally in view of \eqref{eq_estimate_simple}. Having identified the robustness criterion, we show in Section \ref{sec3} how to add redundant switching times, leading one to solve a finite-dimensional nonlinear optimization problem. In Section \ref{sec4}, we provide some numerical illustrations on the attitude control problem of a 3-dimensional rigid body.

\section{Tracking algorithm}
\label{sec2}
\paragraph{Setting.} 
In this paper, we consider the control system 
\begin{equation}
\dot{x}(t) = f(t,x(t),u(t)),
\label{dynamics}
\end{equation}
where $f$ is a smooth function $\R \times \R^n \times \R^m \rightarrow \R^n$, the state $x(\cdot) \in \R^n$, the control $u(\cdot) \in L^{\infty}([0,t_f];\Omega)$, and $\Omega$ is the subset of $\R^m$: $[a_1,b_1] \times \cdots \times [a_m,b_m]$. We make two additional hypothesis: the controls we consider are ``bang-bang'', with a finite number of switching times:
\begin{center}
\begin{tabular}{cc}
$(H_1)$ & $\forall i \in \llbracket1,m \rrbracket$, $u_i(t) \in \left\{a_i,b_i\right\}$, a.e. \\
$(H_2)$ & $\forall i \in \llbracket1,m \rrbracket$, $u_i$ does not chatter. \\
\end{tabular}
\end{center}
A control is chattering when it switches infinitely many times over a compact time interval (see \cite{zhu2016,fuller1963}). Therefore, our method does not apply to those controls. However, when the solution of an optimal control problem chatters, provided that it is possible, one could consider a sub-optimal solution, with only a finite number of switching times.

In the context of optimal control, we will denote the cost under the form 
\begin{equation}
C(u) = \int^{t_f}_0{f^0(t,x(t),u(t))\,dt}.
\label{cost}
\end{equation}

We recalled in the introduction the (classical) definitions of the end-point mapping, of a needle-like variation (\ref{eq_needle}) and the expansion of the end-point mapping subject to a needle-like variation (\ref{eq_var}).

\subsection{Reduced end-point mapping}
\label{reduced_epm}

In this subsection, we give the definition of the reduced end-point mapping, and show a differentiability property.

Let us consider a bang-bang control $u(\cdot)$, and its associated trajectory $x(\cdot)$. For the sake of simplicity, we make the additional assumption that for every switching time $t_j$, one and only one component of the control commutes. Therefore, provided we specify the initial value of each component, the control $u$ is entirely characterized by the switching times of its components and can be represented by a vector:
\begin{equation*}
\left((u_{10}, \ldots,u_{m0}),\left(t_1,i_1\right), \ldots, \left(t_N,i_N\right), t_f \right) \in \Omega \times \R^{2N+1},
\end{equation*}
where $u_{i0} \in \{a_i,b_i\}$ is the initial value for the control $u_i(\cdot)$ ($i \in \llbracket1,m\rrbracket)$, $N$ is the total number of switching times, $t_f$ is the final time, and $i_j$ is the component of the control that switches at time $t_j$. As this representation entirely characterizes the control, we will use indistinctly the notation $u$ and $\left((u_{10}, \ldots,u_{m0}),\left(t_1,i_1\right), \ldots, \left(t_N,i_N\right), t_f \right)$ to speak about the control whose components switch at the times $t_j$. In the literature, $\left(\left(t_1,i_1\right), \ldots, \left(t_N,i_N\right)\right)$ is often called a switching sequence.

\begin{remark}
Had we wanted to allow simultaneous switching of multiple components, we would need to consider controls represented by:
\begin{equation*}
\left((u_{10}, \ldots,u_{m0}),\left(t_1,\mathcal{I}_1\right), \ldots, \left(t_N,\mathcal{I}_N\right), t_f \right),
\end{equation*}
where $\mathcal{I}_j \subset \llbracket1,m \rrbracket$ represents the set of components that switch at time $t_j$.
\end{remark}

\begin{definition}[Reduced end-point mapping]
We define the \emph{reduced end-point mapping} by 
\begin{equation*}
E(x_0,(u_{10}, \ldots,u_{m0}),\left(t_1,i_1\right), \ldots, \left(t_N,i_N\right),t_f) = x_u(x_0,t_f),
\end{equation*}
where $u$ is the control represented by $\left((u_{10}, \ldots,u_{m0}),\left(t_1,i_1\right), \ldots, \left(t_N,i_N\right), t_f \right)$, and $x_u(x_0,t_f)$ is the associated state at time $t_f$, starting at $x_0$.
\end{definition}
Note that in \cite{Maurer2005,Maurer2004}, the authors also reduce a bang-bang control to its switching points, in order to formulate an optimization problem in finite-dimension.

 In the following, when writing this reduced end-point mapping, we may consider that the initial point $x_0$ is fixed, as well as the way the components of the control switch (i.e., we consider that the N-tuple $(i_1, \ldots, i_N)$ is fixed), the initial values $u_{i0}$ and the final time $t_f$. In this context, we may forget them in the notations, and denote the reduced end-point mapping by
\begin{equation*}
E(t_1, \ldots, t_N) = x_u(t_f).
\end{equation*}

A remarkable fact is that the reduced end-point-mapping is differentiable. Compared to the expansion \eqref{eq_var} with respect to a needle-like variation, the sign of $\delta t$ does not matter. For the sake of completeness, we give the proof in appendix.
\begin{proposition}
\label{prop_differentiability_epm}
The reduced end-point mapping is differentiable, and 
\begin{equation*}
dE(t_1 , \ldots, t_N) = 
\begin{pmatrix}
v_1(t_f) & \cdots & v_N(t_f)
\end{pmatrix}
\in \mathcal{M}_{n,N}(\R),
\end{equation*}
where $v_j(\cdot)$ ($j \in \llbracket1,N \rrbracket$) is the solution of the Cauchy problem, defined for $t \geq t_j$:
\begin{align*}
\dot{v}_j(t) &= \frac{\partial f}{\partial x}(t,x(t),u(t))v_j(t) \\
v_j(t_j) &= \left\{
\begin{tabular}{rl}
$f(t_j,x(t_j),(\ldots, a_{i_j}, \ldots)) - f(t_j,x(t_j),u(t_j^+))$ & if $u_{i_j}$ switches from $a_{i_j}$ to $b_{i_j}$. \\
$f(t_j,x(t_j),(\ldots, b_{i_j}, \ldots)) - f(t_j,x(t_j),u(t_j^+))$ &  if $u_{i_j}$ switches from $b_{i_j}$ to $a_{i_j}$.
\end{tabular}
\right.
\end{align*}
The notation $(\ldots, a_{i_j}, \ldots)$ (resp. $(\ldots, b_{i_j}, \ldots)$) is used to show a difference with $u(t_j^+)$ (resp. $u(t_j^-)$) on the $i_j$-th component only.
\end{proposition}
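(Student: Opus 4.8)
The plan is to establish each partial derivative $\partial E/\partial t_j$ separately and then assemble them into the full Fr\'echet differential by a localization-and-superposition argument. Because the switching times are ordered and distinct, $0 < t_1 < \cdots < t_N < t_f$, a small perturbation of a single $t_j$ modifies the control only on a short interval around $t_j$, leaving the rest of the switching structure intact. This lets me treat one switching time at a time and reduce the computation to a first-order expansion of the flow across the short interval on which the perturbed and nominal controls differ, exactly the mechanism behind the needle-variation formula \eqref{eq_var}.

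First I would fix an index $j$ and suppose, say, that the component $i_j$ switches from $a_{i_j}$ to $b_{i_j}$ at $t_j$ (the other case being symmetric under $a \leftrightarrow b$). The key is to compare the two \emph{one-sided} perturbations. For $\delta t_j > 0$ the switch is delayed, so on $[t_j, t_j + \delta t_j]$ the perturbed control $x_\pi$ retains its pre-switch value $(\ldots, a_{i_j}, \ldots)$ while the nominal control already equals $u(t_j^+)$; Taylor-expanding both trajectories from the common point $x(t_j)$ gives $x_\pi(t_j + \delta t_j) - x(t_j + \delta t_j) = \delta t_j\, w_j(t_j) + o(\delta t_j)$ with seed $w_j(t_j) = f(t_j, x(t_j), (\ldots, a_{i_j}, \ldots)) - f(t_j, x(t_j), u(t_j^+))$. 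For $\delta t_j < 0$ the switch is advanced, so on $[t_j - |\delta t_j|, t_j]$ the perturbed control already takes its post-switch value; since only the $i_j$-th component commutes, this inserted value is exactly $u(t_j^+)$, while the nominal reference there is $u(t_j^-) = (\ldots, a_{i_j}, \ldots)$. The same expansion now gives $x_\pi(t_j) - x(t_j) = |\delta t_j|\,[\,f(t_j, x(t_j), u(t_j^+)) - f(t_j, x(t_j), (\ldots, a_{i_j}, \ldots))\,] + o(\delta t_j)$, and since $|\delta t_j| = -\delta t_j$ this is again $\delta t_j\, w_j(t_j) + o(\delta t_j)$ with the \emph{same} seed $w_j(t_j)$. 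Hence the left and right difference quotients coincide: the two sign reversals — one from $|\delta t_j| = -\delta t_j$, the other from interchanging the inserted and reference control values — cancel, so the partial derivative exists and carries no absolute value. This cancellation is the one genuinely delicate point, and it is precisely what distinguishes the reduced end-point mapping from the one-sided needle variations behind \eqref{eq_var}.

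Next I would justify the forward propagation of the seed. In the limit $\delta t_j \to 0$ the perturbation concentrates at $t_j$, and after the (vanishing) perturbation interval both trajectories obey $\dot x = f(t,x,u(t))$ with the \emph{same} control. Therefore the first-order variation $v_j(\cdot)$ of the trajectory difference solves the variational equation $\dot v_j = \frac{\partial f}{\partial x}(t, x(t), u(t))\, v_j$ on $[t_j, t_f]$ with initial datum $v_j(t_j) = w_j(t_j)$; this is the standard consequence of the smoothness of $f$ together with Gronwall's estimate, and it identifies $\partial E/\partial t_j = v_j(t_f)$ with the Cauchy problem in the statement.

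Finally I would upgrade the family of partial derivatives to genuine Fr\'echet differentiability and read off the Jacobian. For a joint perturbation $\delta\mathcal{T} = (\delta t_1, \ldots, \delta t_N)$ with $\|\delta\mathcal{T}\|$ small, the perturbation intervals around the distinct times $t_j$ are pairwise disjoint, so the perturbed and nominal controls differ only on a disjoint union of short intervals. A single Gronwall argument along $x(\cdot)$ then shows the contributions superpose to first order with a remainder that is $o(\|\delta\mathcal{T}\|)$ uniformly, giving $E(\mathcal{T} + \delta\mathcal{T}) = E(\mathcal{T}) + \sum_{j=1}^N \delta t_j\, v_j(t_f) + o(\|\delta\mathcal{T}\|)$. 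Thus $E$ is differentiable with differential the matrix whose columns are $v_1(t_f), \ldots, v_N(t_f)$; continuity of these columns in $\mathcal{T}$ (again by smoothness of $f$) even yields that $E$ is of class $C^1$. The only step I expect to require real care is the two-sided expansion and sign cancellation of the second paragraph; the propagation and superposition are routine given standard ODE perturbation theory.
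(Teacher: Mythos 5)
Your proposal is correct and follows essentially the same route as the paper: the paper's proof also hinges on comparing the delayed-switch ($\delta t_j>0$) and advanced-switch ($\delta t_j<0$) perturbations via needle-like variations and observing that the two sign reversals cancel (in the paper this is phrased as $w_1=-v_1$ by uniqueness for the linear variational equation), so the derivative carries no absolute value. The only differences are presentational: you inline the first-order expansion across the perturbation interval where the paper invokes its needle-variation lemma \eqref{eq_var}, and you spell out the joint (Fr\'echet) differentiability by a disjoint-interval superposition and Gronwall estimate where the paper simply states that the general case ``follows by an immediate iteration.''
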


\begin{remark}
\label{rq_affine}
In the special (and important in practice) case of a control-affine system 
\[
\dot{x}(t) = f_0(x(t)) + \sum_{k=1}^m{u_k(t) f_k(x(t))},
\]
the initial condition on $v_j$ can be written much more easily:
\begin{equation*}
v_j(t_j) = \left\{
\begin{tabular}{rl}
$ (a_{i_j} - b_{i_j}) f_{i_j}(x(t_j))$ & if $u_{i_j}$ switches from $a_{i_j}$ to $b_{i_j}$. \\
$(b_{i_j} - a_{i_j})  f_{i_j}(x(t_j))$ & if $u_{i_j}$ switches from $b_{i_j}$ to $a_{i_j}$.
\end{tabular}
\right.
\end{equation*}
\end{remark}

\subsection{Absorbing perturbations}
\label{backward_epm}

As explained in the introduction, we present in this paper a closed-loop method to actually steer the system towards a point $x_f$, with bang-bang controls, even in the presence of perturbations. 

First, for the sake of simplicity, we will explain how to control the system to some point $x_f + \delta x_f$. We will see that this idea can be adapted for our purpose of controlling a perturbed trajectory, by simply reversing the time.

\paragraph{Perturbations on the final point.}

We briefly generalize the problem introduced in the introduction. Let 
\[ 
\overline{u} = \left((u_{10}, \ldots,u_{m0}),\left(\overline{t}_1,i_1\right), \ldots, \left(\overline{t}_N,i_N\right), t_f \right) \in \Omega \times \R^{2N+1}
\] 
be a control such that $x_{\overline{u}}(t_f) = x_f$. That is, using the definition of Subsection \ref{reduced_epm}, we have that 
\begin{equation*}
E(x_0,(u_{10}, \ldots,u_{m0}),\left(\overline{t}_1,i_1\right), \ldots, \left(\overline{t}_N,i_N\right), t_f) = x_f.
\end{equation*}
Or, considering that the final time $t_f$, the initial point $x_0$, the components $(i_1, \ldots,i_N)$ and the initial values $(u_{10}, \ldots,u_{m0})$ are fixed, 
\begin{equation*}
E(\overline{t}_1, \ldots, \overline{t}_N) = x_f.
\end{equation*}
Let $\delta x_f$ be some perturbation of the final point $x_f$. We look for a vector $\delta \mathcal{T} = (\delta t_1, \ldots, \delta t_N)$ so that the system reaches the target point $x_f + \delta x_f$:
\begin{equation*}
E(\overline{t}_1 + \delta t_1, \ldots, \overline{t}_N + \delta t_N) = x_f + \delta x_f.
\end{equation*}
As we have shown in Proposition \ref{prop_differentiability_epm} the differentiability of the reduced end-point mapping, we can write 
\begin{equation*}
E(\overline{t}_1 + \delta t_1, \ldots, \overline{t}_N + \delta t_N) = E(\overline{t}_1, \ldots,\overline{t}_N) + dE(\overline{t}_1, \ldots,\overline{t}_N) \cdot \delta \mathcal{T} + o(\| \delta \mathcal{T}\|).
\end{equation*}
At order one, the solution is given by the solution of the linear equation
\begin{equation*}
dE(\overline{t}_1, \ldots,\overline{t}_N) \cdot \delta \mathcal{T} = \delta x_f.
\end{equation*}
It is natural to target the final point $x_f +\delta x_f$ while shifting the switching times as little as possible. That is, we look for the solution of minimal euclidian norm of the previous equation, which is given by $\delta \mathcal{T} = dE(\overline{t}_1, \ldots, \overline{t}_N)^{\dagger} \cdot \delta x_f$.

Therefore, we have shown how to compute, at order one, the correction to apply to control the system to some point $x_f + \delta x_f$: it boils down to solving a least-squares problem. Let us keep in mind that our definitive goal is to control systems that are perturbed all along their trajectory, to a fixed final point $x_f$. In other words, from a perturbed point $x(t) + \delta x(t)$ at some time $t \in [0,t_f)$, we want to absorb the perturbation $\delta x(t)$ and still reach the final point $x_f$. Even if this is a slightly different setting, we show that we can apply the same idea if we look at a \emph{backward problem}.

\paragraph{Absorbing a perturbation at time $t$.}
Let $(\overline{x}(\cdot),\overline{u}(\cdot))$ be a nominal solution of the control system \eqref{dynamics}. We assume that when applying in practice the control $\overline{u} = \overline{\mathcal{T}}$, because of model uncertainties and perturbations, we observe a perturbed trajectory $x_{per}(t) = \overline{x}(t) + \delta x(t)$.

Let $t \in [0,t_f]$. Starting from the perturbed point $\overline{x}(t) + \delta x(t)$, which stands as a new initial point, we want to reach the final point $x_f$ in time $t_f-t$. Hence, we look for a control $\overline{u} + \delta u$ such that 
\[
E(\overline{x}(t) + \delta x(t),\overline{u}+\delta u,t_f-t) = x_f.
\]
Assume for a moment that the perturbation of the control $\delta u$ is small in $L^{\infty}$ norm. Then, at least formally, one can write
\[
E(\overline{x}(t),\overline{u},t_f-t) + \frac{\partial E}{\partial x_0}(\overline{x}(t),\overline{u},t_f-t)\cdot \delta x(t) + \frac{\partial E}{\partial u}(\overline{x}(t),\overline{u},t_f-t)\cdot \delta u  + o(\|\delta x(t)\|+\|\delta u\|)= x_f.
\]
Therefore, at order one, we look for a solution of the (linear) equation
\begin{equation}
\frac{\partial E}{\partial x_0}(\overline{x}(t),\overline{u},t_f-t)\cdot \delta x(t) + \frac{\partial E}{\partial u}(\overline{x}(t),\overline{u},t_f-t)\cdot \delta u  = 0.
\label{eq_perturbations}
\end{equation}

However, we do not want, in this paper, to apply small perturbations in the $L^{\infty}$ norm, as they would not result in bang-bang controls (However, this is similar to what is done while performing a Ricatti procedure to stabilize a system or track a reference trajectory). Nevertheless, reducing the end-point mapping to the switching times enables us to preserve the bang-bang structure: in the formalism previously introduced, we need to solve the nonlinear system of equations 
\begin{equation*}
E(\overline{x}(t) + \delta x(t), \overline{\mathcal{T}} + \delta \mathcal{T}, t_f - t) = x_f.
\end{equation*}

The equation \eqref{eq_perturbations} becomes
\begin{equation}
\frac{\partial E}{\partial \mathcal{T}}(\overline{x}(t),\overline{\mathcal{T}},t_f-t)\cdot \delta \mathcal{T} = - \frac{\partial E}{\partial x_0}(\overline{x}(t),\overline{\mathcal{T}},t_f-t)\cdot \delta x(t),
\label{eq_perturbations_1}
\end{equation}
where the expression $\partial E / \partial \mathcal{T}$ is given by Proposition \ref{prop_differentiability_epm}. 

\paragraph{A backward problem.} Solving this equation requires the computation of the partial differential $\partial E/\partial x_0$ at the initial point $\bar{x}(t)$. We will see now that it can be overcome by introducing a backward problem. Of course, the two formulations are equivalent.

\begin{definition}[Backward end-point mapping]
Let $u = (t_1, \ldots, t_N)$ be a bang-bang control, and $t \in [0,t_f]$. We define the backward end-point mapping by
\begin{equation*}
\tilde{E}(t, t_1, \ldots, t_N) = \tilde{x}(t_f - t),
\end{equation*}
where $\tilde{x}(\cdot)$ is the solution to the Cauchy problem
\begin{align*}
\dot{\tilde{x}}(t) &= -f(t_f - t, \tilde{x}(t),u(t_f - t)), \\
\tilde{x}(0) &= x_f.
\end{align*}
\end{definition}

Note that for the nominal trajectory $(\overline{x}(\cdot),\overline{u}(\cdot))$, we have that 
\begin{equation*}
\tilde{E}(t, \overline{t}_1, \ldots, \overline{t}_N) = \overline{x}(t).
\end{equation*}
Indeed, we have in this case that $\overline{x}(t) = \tilde{x}(t_f - t)$: if we integrate the nominal system backward, starting from the point $x_f$ during a time period $t_f - t$, we end up at point $\overline{x}(t)$.

\begin{remark}
Let $t \in [0,t_f]$, and $j$ be the smallest index such that $\overline{t}_j > t$ (with the convention that $j = N+1$ if $t>t_N$). Then, note that $\overline{t}_1, \ldots, \overline{t}_{j-1}$ do not play any role in the computation of $\tilde{E}(t, \overline{t}_1, \ldots, \overline{t}_N)$. The differential of $\tilde{E}$ can be computed with the Proposition \ref{prop_differentiability_epm}. It is a matrix of size $n \times (N - j + 1)$. 
\end{remark}

In this context, the problem of adjusting the system back towards $x_f$ writes: at time $t$, find $(t_j, \ldots, t_N)$ such that 
\begin{equation}
\tilde{E}(t,{t}_1, \ldots, {t}_N) = x_{per}(t).
\label{prob_backward}
\end{equation}
  
We see that reversing the time, we place ourselves in the setting previously described of aiming at a perturbed final point. Therefore, we have the following proposition.
  
\begin{proposition}
At order one in $\delta x$, the solution of minimal norm of the problem \eqref{prob_backward} is given by $\overline{\mathcal{T}} + \delta \mathcal{T}$, with
\begin{equation}
\delta \mathcal{T} = d\tilde{E}(t,\overline{\mathcal{T}})^{\dagger}\cdot \delta x(t),
\label{eq_recalage}
\end{equation}
where $d\tilde{E}(t,\overline{\mathcal{T}})^{\dagger}$ denotes the pseudo-inverse of $d\tilde{E}(t,\overline{\mathcal{T}})$. Moreover, we have the estimate
\begin{equation}
\left\| \delta \mathcal{T} \right\|_2 \leq \frac{1}{\sigma_{min}(t)} \left\| \delta x(t) \right\|_2,
\label{estimate_deltat}
\end{equation}
where $\sigma_{min}(t)$ is the smallest positive singular value of $d\tilde{E}(t,\overline{\mathcal{T}})$.
\end{proposition}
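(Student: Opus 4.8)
The plan is to linearize the nonlinear equation \eqref{prob_backward} around the nominal switching times $\overline{\mathcal{T}}$, thereby reducing it to exactly the least-squares situation already treated for perturbations of the final point, and then to read off the pseudo-inverse formula and the singular value bound from elementary linear algebra.

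First I would use the two facts recalled just before the statement: that $\tilde{E}(t,\overline{\mathcal{T}}) = \overline{x}(t)$, and that the observed perturbed point is $x_{per}(t) = \overline{x}(t) + \delta x(t)$. Looking for a solution of \eqref{prob_backward} of the form $\overline{\mathcal{T}} + \delta\mathcal{T}$ and invoking the differentiability of $\tilde{E}$ with respect to the switching times (Proposition \ref{prop_differentiability_epm}, applied to the backward dynamics), I would write
$$\tilde{E}(t,\overline{\mathcal{T}} + \delta\mathcal{T}) = \tilde{E}(t,\overline{\mathcal{T}}) + d\tilde{E}(t,\overline{\mathcal{T}})\cdot\delta\mathcal{T} + o(\|\delta\mathcal{T}\|) = \overline{x}(t) + d\tilde{E}(t,\overline{\mathcal{T}})\cdot\delta\mathcal{T} + o(\|\delta\mathcal{T}\|).$$
Setting this equal to $x_{per}(t)$ and dropping the $o(\|\delta\mathcal{T}\|)$ term, the first-order problem becomes the linear system $d\tilde{E}(t,\overline{\mathcal{T}})\cdot\delta\mathcal{T} = \delta x(t)$, which is precisely the situation analyzed above with $\delta x_f$ replaced by $\delta x(t)$. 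By the defining property of the Moore-Penrose pseudo-inverse, the minimal-Euclidean-norm solution of this system is $\delta\mathcal{T} = d\tilde{E}(t,\overline{\mathcal{T}})^{\dagger}\cdot\delta x(t)$, which is the claimed formula \eqref{eq_recalage}.

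For the estimate \eqref{estimate_deltat}, I would set $A = d\tilde{E}(t,\overline{\mathcal{T}})$ and use a singular value decomposition $A = U\Sigma V^{\top}$, with $U,V$ orthogonal and singular values $\sigma_1 \geq \cdots \geq \sigma_r > 0$ on the diagonal of $\Sigma$, where $r$ is the rank of $A$. Then $A^{\dagger} = V\Sigma^{\dagger}U^{\top}$, where $\Sigma^{\dagger}$ carries the entries $1/\sigma_i$. Writing $\delta x(t) = \sum_i c_i\, w_i$ in the orthonormal basis $(w_i)$ of left singular vectors (the columns of $U$) and using orthogonality of $U$ and $V$, a direct computation gives
$$\|\delta\mathcal{T}\|_2^2 = \|A^{\dagger}\delta x(t)\|_2^2 = \sum_{i=1}^{r} \frac{c_i^2}{\sigma_i^2} \leq \frac{1}{\sigma_{min}(t)^2}\sum_{i=1}^{r} c_i^2 \leq \frac{\|\delta x(t)\|_2^2}{\sigma_{min}(t)^2},$$
with $\sigma_{min}(t) = \sigma_r$. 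Taking square roots yields \eqref{estimate_deltat}; equivalently, this is just the fact that the operator norm of $A^{\dagger}$ equals $1/\sigma_{min}(t)$.

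I do not expect a genuine obstacle: once the backward formulation is set up, the proposition is a restatement of the forward case followed by standard properties of the pseudo-inverse. The only point deserving a word of care is the meaning of \emph{solution} when $\delta x(t)$ does not lie in the range of $A$, so that the linearized system is inconsistent. In that case $d\tilde{E}(t,\overline{\mathcal{T}})^{\dagger}\cdot\delta x(t)$ is to be understood as the minimal-norm least-squares solution, i.e. the minimal-norm minimizer of $\|A\cdot\delta\mathcal{T} - \delta x(t)\|_2$; the bound above persists unchanged, since replacing $\delta x(t)$ by its orthogonal projection onto the range of $A$ only lowers its norm. Under the surjectivity hypothesis used throughout the paper the range is all of $\R^n$ and this subtlety is vacuous.
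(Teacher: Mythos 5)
Your proposal is correct and follows essentially the same route as the paper's proof: linearize $\tilde{E}$ around $\overline{\mathcal{T}}$ using Proposition \ref{prop_differentiability_epm}, reduce \eqref{prob_backward} to the linear system $d\tilde{E}(t,\overline{\mathcal{T}})\cdot\delta\mathcal{T} = \delta x(t)$, invoke the minimal-norm property of the Moore--Penrose pseudo-inverse, and bound $\|\delta\mathcal{T}\|_2$ via the fact that $\bigl\| d\tilde{E}(t,\overline{\mathcal{T}})^{\dagger} \bigr\|_2 = 1/\sigma_{min}(t)$. The only cosmetic differences are that you unfold this last fact through an explicit SVD computation where the paper simply cites it, and that your closing remark on the inconsistent (least-squares) case is handled by the paper in a separate remark rather than inside the proof.
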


\begin{proof}
The scheme of the proof has already been exposed previously in the paper. However, we write it extensively here. Let $\delta \mathcal{T} = \mathcal{T} - \overline{\mathcal{T}}$. The problem writes
\begin{equation*}
\tilde{E}(t,\overline{\mathcal{T}} + \delta \mathcal{T}) = x_{per}(t).
\end{equation*}
According to Proposition \ref{prop_differentiability_epm}, the backward end-point mapping is differentiable (and we also know how to compute its derivative), so
\begin{align*}
\tilde{E}(t,\overline{\mathcal{T}} + \delta \mathcal{T}) 	&= \tilde{E}(t,\overline{\mathcal{T}}) + d\tilde{E}(t,\overline{\mathcal{T}})\cdot \delta \mathcal{T} + o(\| \delta \mathcal{T} \|) \\
							&= \overline{x}(t) + d\tilde{E}(t,\overline{\mathcal{T}})\cdot \delta \mathcal{T} + o(\| \delta \mathcal{T}\|).
\end{align*}
So, at order one, the problem writes 
\begin{equation}
d\tilde{E}(t,\overline{\mathcal{T}})\cdot \delta \mathcal{T} = \delta x(t).
\label{eq_perturbations_2}
\end{equation}
It is well known (see \cite{allaire2002algebre} for instance), that the solution of minimal norm of this equation is $\delta \mathcal{T} = d\tilde{E}(t,\overline{\mathcal{T}})^{\dagger}\cdot \delta x(t)$. Besides, let $\sigma_{max}(t) > \cdots > \sigma_{min}(t) > 0$ denote the positive singular values of $d\tilde{E}(t,\overline{\mathcal{T}})$. We have that $\left\| d\tilde{E}(t,\overline{\mathcal{T}})^{\dagger} \right\|_2 = {1}/{\sigma_{min}(t)}$ ($\left\| \cdot \right\|_2$ for a matrix denotes the induced norm corresponding to the euclidean norm), so that 
\begin{align*}
\left\| \delta \mathcal{T} \right\|_2&= \left\| d\tilde{E}(t,\overline{\mathcal{T}})^{\dagger}\cdot \delta x (t)\right\|_2  \\
					& \leq \left\| d\tilde{E}(t,\overline{\mathcal{T}})^{\dagger} \right\|_2 \cdot \left\| \delta x (t)\right\|_2 \\
					& \leq \frac{\left\| \delta x (t)\right\|_2}{\sigma_{min}},
\end{align*}
which concludes the proof.
\end{proof}

\begin{remark}
We have the relation that, for all vector of switching times $\mathcal{T}$
\begin{equation*}
E(\tilde{E}(t,\mathcal{T}),\mathcal{T},t_f-t) = x_f.
\end{equation*}
Differentiating this equality with respect to $\mathcal{T}$, we have that, for all $\delta \mathcal{T}$
\begin{equation*}
\frac{\partial E}{\partial x_0}(\tilde{E}(t,\mathcal{T}),\mathcal{T},t_f-t)\cdot d\tilde{E}(t,\overline{T})\cdot \delta \mathcal{T} + \frac{\partial E}{\partial \mathcal{T}}(\tilde{E}(t,\mathcal{T}),\mathcal{T},t_f-t)\cdot \delta \mathcal{T} = 0.
\end{equation*} 
Replacing the second term by its value in \eqref{eq_perturbations_1}, it follows that 
\begin{equation*}
\frac{\partial E}{\partial x_0}(\tilde{E}(t,\mathcal{T}),\mathcal{T},t_f-t)\cdot d\tilde{E}(t,\mathcal{T})\cdot \delta \mathcal{T} = \frac{\partial E}{\partial x_0}(\tilde{E}(t,\mathcal{T}),\mathcal{T},t_f-t)\cdot \delta x(t).
\end{equation*}

It is easy to show that $\partial E/\partial x_0$ can be expressed as the resolvent of a linearized system. Therefore, the matrix $\partial E/\partial x_0 $ is invertible, and the equations \eqref{eq_perturbations_1} and \eqref{eq_perturbations_2} are equivalent. But solving \eqref{eq_perturbations_2} only requires to compute the derivative of $\tilde{E}$. This is what we do in the following.
\end{remark}

\begin{remark}
\label{rq_leastsquares}
Note that it might not always be possible to find a solution to the equation $d\tilde{E}(t,\overline{\mathcal{T}})\cdot \delta \mathcal{T} = \delta x(t)$. This may happen for instance if $t > t_{N-n+1}$, i.e., we do not have enough degrees of freedom left to absorb the perturbation $\delta x(t) \in \R^n$. However, we can still give a meaning to the equation $d\tilde{E} \cdot \delta \mathcal{T} = \delta x(t)$. We look for a solution of the least-square problem:
\begin{equation*}
\min_{\delta \mathcal{T} \in \R^N} \left\| d\tilde{E}(t,\overline{t}_1, \ldots, \overline{t}_N) \cdot \delta \mathcal{T} - \delta x (t)\right\|^2_2,
\end{equation*}
for which $\delta \mathcal{T} =  d\tilde{E}(t,\overline{t}_1, \ldots, \overline{t}_N)^{\dagger} \cdot \delta x(t)$ is still the solution of minimal norm (see \cite{allaire2002algebre}). We see here emerging the idea that the number of switching times (i.e., degree of freedom) left at time $t$, is going to be an important factor to track the system back towards the final point $x_f$.
\end{remark}

\paragraph{Numerical algorithm.} 

At time $t$, Equation \eqref{eq_recalage} provides us with a formula to adjust the control so that the perturbed trajectory eventually reaches $x_f$. But it certainly does not enable us to face perturbations that would happen after time $t$. In order to absorb perturbations all along the trajectory, we suggest the following algorithm: Let $\mathcal{T}$ be an initial control. Given an integer $s$ and a subdivision $0 < \tau_1 < \cdots < \tau_s < t_f$ of the interval $[0,t_f]$, we adjust the control at each $\tau_i$ for all $i \in \llbracket 1,s\rrbracket$. That is, for each $i \in \llbracket 1,s\rrbracket$, we measure the drift $\delta x (\tau_i)= x_{per}(\tau_i) - x_{ref}(\tau_i)$, and compute the differential of the backward end-point mapping $d\tilde{E}(\tau_i,\overline{t}_1, \ldots, \overline{t}_N)$. We deduce from \eqref{eq_recalage} that the correction to apply is then $\delta \mathcal{T} = d\tilde{E}(\tau_i,\overline{t}_1, \ldots, \overline{t}_N)^{\dagger} \cdot \delta x(\tau_i)$. We then update the control by considering the new vector of switching times $\mathcal{T} + \delta \mathcal{T}$.


\begin{remark}
\label{rq_interchanging}
When computing the correction $\mathcal{T} + \delta \mathcal{T}$, it may happen that the new switching times are not ordered, i.e.,  there exists some integer $ j \in \llbracket1,N-1\rrbracket$ such that $t_{j+1} < t_j$. In this case, we consider that the correction is not physically acceptable, and we reject it. (Note that in some cases, we may want to continue the integration of the system even if two switching times are not ordered. In that case, we can always use the last admissible control, where all the switching times are ordered.)
\end{remark}

\begin{remark}
The computation of the differential $d\tilde{E}(t,\overline{t}_1, \ldots, \overline{t}_N)$ is done via the integration of a system of ordinary differential equations, which can be done efficiently and quickly using numerical integrators. However, the size of the system (as well as the time required to compute the pseudo-inverse) directly depends on the number of switching times $N$ and on the state dimension $n$.
\end{remark}


\section{Promoting robustness}
\label{sec3}

Intuitively, we want to say that a control is robust whenever the correction $\delta \mathcal{T}$ required to absorb the perturbation $\delta x(t)$ is small. Since we have shown the estimate $\left\| \delta \mathcal{T} \right\|_2 \leq \left\| \delta x(t) \right\|_2 / \sigma_{min}(t)$, a robust trajectory is then one for which the values of $1/\sigma_{min}(t)$ remain small along the trajectory.

\begin{definition}
We define the following cost, that we will use to characterize the robustness of a trajectory 
\begin{equation}
C_r(t_1, \ldots, t_N) = \int^{t_N}_0{\frac{1}{\sigma_{min}(t)^2}\, dt}.
\label{cout_robu}
\end{equation}
\end{definition}

\begin{remark}
In the previous definition, the upper bound in the integral is $t_N$, because for $t > t_N$, the backward end-point mapping derivative $d\tilde{E}(t,t_1, \ldots,t_N)$ is not defined, and neither is $\sigma_{min}(t)$. For some reason, we may only want to have robustness up until some time $t^{\star} < t_N$. Then the previous definition would become $\int^{t^{\star}}_0{{1}/{\sigma_{min}(t)^2}dt}$.
\end{remark}

In this section, we show how the switching times of a trajectory can be chosen to build one that is more robust. We also suggest a new way to design a trajectory, by adding redundant switching times, that give us more degrees of freedom. Note also that we will start from a solution of an \emph{optimal} control problem, because it is of high importance in practice, but the method generally applies when starting from any control, as long as it satisfies the hypothesis $(H_1)$ and $(H_2)$. Starting from an initial control such that $E(t_1, \ldots, t_N) = x_f$, we look for \emph{redundant} switching times $(s_1, \ldots,s_l)$ such that $E(t_1, \ldots, t_N,s_1, \ldots,s_l) = x_f$, while minimizing the cost \eqref{cout_robu} that accounts for robustness:
\[
C_r(t_1, \ldots, t_N,s_1, \ldots,s_l).
\]

\subsection{An auxiliary optimization problem}

Let us consider a bang-bang trajectory (satisfying the hypothesis ($H_1$) and ($H_2$)) of the control system \eqref{dynamics}, optimal for the cost \eqref{cost}. That is, $\overline{u} = ((u_{10}, \ldots,u_{m0}),\left(\overline{t}_1,i_1\right), \ldots, \left(\overline{t}_N,i_N\right),t_f)$ is an optimal solution of the optimization problem 
\begin{equation}
\begin{array}{ccc}
\min_{(i_1, \ldots, i_N)} & \min_{(t_1, \ldots,t_N)} & C(t_1, \ldots,t_N). \\
 & \text{s.t.  } E(t_1, \ldots, t_N) = x_f \\
\end{array}
\label{prob_ini}
\end{equation}
Let us emphasize the fact that reducing the control to its switching times enables us to reduce a problem in infinite dimension 
\begin{equation*}
\begin{array}{cc}
\min_{u \in L^{\infty}([0,t_f];\Omega)} & C(u) \\
 \text{s.t.  } E(u) = x_f \\
\end{array}
\end{equation*}
to a finite number of non-linear problems under non-linear constraints in finite dimension, provided we set $N$, as we left aside chattering trajectories.

In order to make the control more robust we suggest to solve the following problem. We fix the components of the control $(i_1, \ldots, i_N)$, and we introduce the cost that accounts for the robustness of a trajectory:
\begin{equation*}
\begin{array}{cc}
\min_{(t_1, \ldots,t_N)} & \lambda_1 C(t_1, \ldots,t_N) + \lambda_2 C_r(t_1, \ldots,t_N), \\
 \text{s.t.  } E(t_1, \ldots, t_N) = x_f \\
\end{array}
\end{equation*}
where $\lambda_1$ and $\lambda_2$ are two parameters, chosen to give more or less importance to the different costs. For instance, if $\lambda_1 \gg  \lambda_2$, the solution is close to the initial one $(\overline{t}_1, \ldots,\overline{t}_N)$.

\subsection{Redundancy creates robustness}

Let us consider a control $u = ((u_{10}, \ldots,u_{m0}),\left({t}_1,i_1\right), \ldots, \left({t}_N,i_N\right),t_f)$. In order to reduce the optimization space, we will consider in the following subsection that the initial control values $(u_{10}, \ldots,u_{m0})$, the components $(i_1, \ldots, i_N)$ and the final time $t_f$ are fixed, so we will forget them in the notations.

We propose here to go further in order to improve the robustness of the corresponding trajectory. We do so by adding needles to some components of the control. By needle, we mean a short impulse on one of the control. Let us denote by $l$ the number of needles we are willing to add. It means that we look for additional switching times $[(s_1,s_2), \ldots,(s_{2l-1},s_{2l})]$ and components of the control $(j_1, \ldots, j_l)$, so that for all $i \in \llbracket 1,l \rrbracket$, $(s_{2i-1},s_{2i})$ are switching times for the $j_i$-th components of the control (see Figure \ref{fig_redundancy}). It aims at giving us more degrees of freedom while trying to absorb perturbations $\delta x$ by moving the switching times $(\mathcal{T},\mathcal{S}) = (t_1, \ldots, t_N, (s_1,s_2), \ldots,(s_{2l-1},s_{2l}))$. Thus, we are solving the optimization problem
\begin{equation}
\begin{array}{ccc}
\min_{(j_1, \ldots,j_l)} 	& \min_{(\mathcal{T},\mathcal{S})} & \lambda_1 C(\mathcal{T},\mathcal{S}) + \lambda_2 C_r(\mathcal{T},\mathcal{S}). \\
				     	& \text{s.t.  } E(\mathcal{T},\mathcal{S}) = x_f \\
\end{array}
\label{prob_add_needles}
\end{equation}

\begin{remark}
If the original bang-bang control strategy $\bar{u}$ does not come from an optimization process, that is there is no cost $C$ associated with it, we can still consider problem \eqref{prob_add_needles} but with $\lambda_1 = 0$.
\end{remark}

\begin{figure}[h!]
  \hspace{0.6cm}
  \subfloat[Initial control.]{\includegraphics[scale = 0.85]{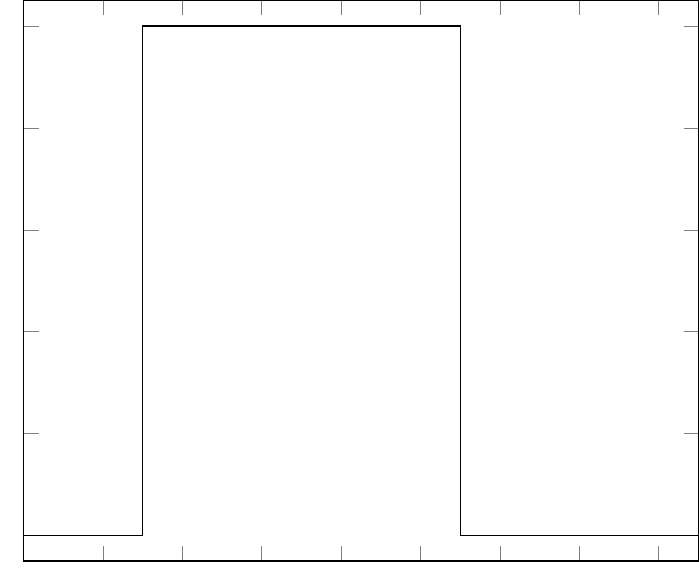}}
  \hspace{1.cm}
  \subfloat[Adding four switching times i.e., 2 needles.]{\includegraphics[scale=0.85]{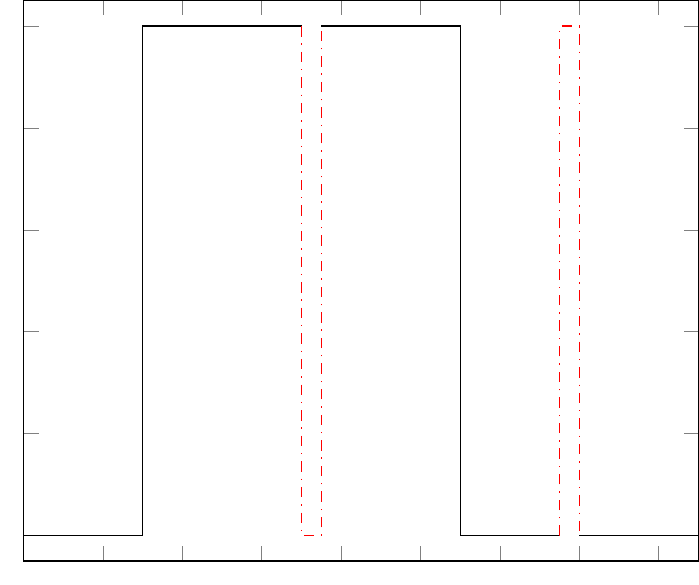}}
  \caption{Principle of adding needles.}
  \label{fig_redundancy}
\end{figure}

Let us denote by $\overline{\mathcal{T}}$ the solution of problem \eqref{prob_ini}, and by $(\mathcal{T}^{\star},\mathcal{S}^{\star})$ the solution of problem \eqref{prob_add_needles}. Then, we have that 
\begin{equation*}
C(\overline{\mathcal{T}}) \leq C(\mathcal{T}^{\star}, \mathcal{S}^{\star}).
\end{equation*} 
It means that the solution $(\mathcal{T}^{\star}, \mathcal{S}^{\star})$ is sub-optimal with respect to the initial cost $C$. However, this sub-optimality comes with a gain in terms of robustness. Besides, the loss of optimality (and therefore gain in robustness) can be controlled by the choice of the coefficients $\lambda_1$ and $\lambda_2$.

This problem is a mixed problem, with integer variables (the components $\left(j_1, \ldots, j_l\right)$), and continuous variables (the switching times $\left(t_1, \ldots, t_N, (s_1,s_2), \ldots,(s_{2l-1},s_{2l})\right)$). However, if the components are fixed, we only have to solve a non-linear problem subject to non-linear constraints in finite dimension
\begin{equation}
\begin{array}{cc}
\min_{(\mathcal{T},\mathcal{S})} & \lambda_1 C(\mathcal{T},\mathcal{S}) + \lambda_2 C_r(\mathcal{T},\mathcal{S}).  \\
 \text{s.t.  } E(\mathcal{T},\mathcal{S}) = x_f \\
\end{array}
\label{prob_fixed_modes}
\end{equation}
We used an interior-point algorithm to solve \eqref{prob_fixed_modes}. In \cite{Antsaklis2000,Zhu2015}, gradient-based algorithms are shown to be effective to solve such problems, when the sequence of indices $\left(j_1, \ldots, j_l\right)$ is fixed. 
Therefore a ``na\"ive'' way to proceed, if $m$ denotes the number of components of the control, is to solve $m^l$ optimization problems, which is extremely costly if $m$ or $l$ is big. A compromise has to be found between the potential benefit in robustness and the computational cost. Such a compromise will however depend on the particular problem at hand, so we do not elaborate too much on this issue and give an example in Section \ref{sec4}. Let us cite \cite{Chyba2008,Chyba2009}, where the authors parametrize an optimal control problem (for the time-minimal and $L^1$ problem) with the switching times of the controls. They simplify its complex structure by fixing the number of switching times, and wonder how many switching times are required to obtain a cost close to the optimal one : the result is striking as 2 or 3 may be enough. However, they know from an \emph{a priori} study the value of the optimal $L^1$ or time-minimal cost, and therefore can stop adding switching times when reaching a given percentage of this optimal value of the criterion. In our problem, we do not know what is the optimal value of the criterion we identified to quantify the robustness of a trajectory. It becomes necessary to find another way to decide how many needles to add.

One could consider tackling directly Problem \ref{prob_add_needles}, a combinatorial optimization problem (which is a class of problem known to be hard to solve). Recent years have seen the development of advanced numerical procedures to deal with the combinatorial nature of those problem at a reasonable computational cost. We give more details on this issue at the end of this section.

\begin{remark} 
Let us make here a remark on the ordering of the switching times. In the vector $(\mathcal{T},\mathcal{S})$ are stored the switching times ${t}_{i}$ and $s_i$ that represent the control ${u}$. Those swicthing times are not necessarily ordered during or after the optimization process, so let $\mathbb{T} = (\tau_1, \ldots,\tau_{N+2l})$ denote the ordered equivalent to $(\mathcal{T},\mathcal{S})$. So far, we have made the implicit assumption that when we perform the numerical integration of the system, the switching times are ordered: $\tau_{i+1} - \tau_{i} \geq 0$ for all $i \in \llbracket 0,N+2l-1 \rrbracket$. We recall that our goal is to absorb perturbations $\delta x$. As explained in Subsection \ref{backward_epm}, we compute at order one the correction to apply $\delta \mathbb{T} = d{E}(\mathbb{T})^{\dagger}\cdot \delta x$. At this point, we could have that $\mathbb{T} + \delta \mathbb{T}$ does not satisfy this ordering property. Then, we consider that $\mathbb{T} + \delta \mathbb{T}$ is not admissible, and an estimate like \eqref{estimate_deltat} would not hold.
\label{rq_gap}
\end{remark}

In the following, in order to guarantee that we do not have an interchanging of the switching times (at least for small perturbations), we add an additional constraint whilst elaborating the robustified trajectory $({u}(\cdot),{x}(\cdot))$ at \eqref{prob_add_needles}: 
\begin{equation}
\begin{array}{ccc}
{\tau}_{i+1} - {\tau}_i \geq \eta & \text{for all} & i \in \llbracket 0,N+2l-1 \rrbracket,\\
\end{array} 
\label{eq_gap}
\end{equation}
for some $\eta > 0$, where $\mathbb{T} = (\tau_1, \ldots, \tau_{N+2l})$ denotes the re-ordering of the vector $(\mathcal{T},\mathcal{S})$. In that way, we ensure that two consecutive switching times ($\mathcal{T}$ and $\mathcal{S}$ combined) are at least distant of $\eta$. Thus, if $\delta x$ is small enough, the elements of the vector $\mathbb{T} + d{E}(\mathbb{T})^{\dagger}\cdot \delta x $ remain in ascending order. Besides, such a constraint is often highly justified in practice, for instance if a physical system has to spend some minimum time $\eta$ before it switches to another mode. For example, in Section \ref{sec4}, the attitude control of a rigid body is studied. In real life, because of robustness issues and mechanical constraints, nozzles on a space launcher have indeed a minimum activation time.

\begin{remark}
Let $t_f$ denote the final time. If $\eta$ is the minimal time between two switchings in \eqref{eq_gap}, then the total number of switchings $N + 2l$ has an upper bound of $\lfloor t_f/\eta \rfloor$.
\end{remark}

The elaboration of a robust trajectory in \eqref{prob_add_needles} can be seen as an optimal control problem of switched-mode dynamical system. A recent survey on switched systems can be found in \cite{Zhu2015}. This theory deals with control systems where the dynamics can only take a finite number of modes. To determine the command law, one has to determine the switching times, as well as the different modes of the system. If the modes are fixed (in our case, it means that the components $(i_1, \ldots, i_N, j_1, \ldots, j_l)$ are fixed), it is often called a timing-optimization problem ; if not, a scheduling optimization problem. In \cite{Piccoli1999,Sussmann2000}, necessary conditions are derived, for trajectories of hybrid systems considering a fixed sequence of modes of finite length (in our setting, it corresponds to the Problem \eqref{prob_fixed_modes}). In \cite{Ali2014, Wardi2012}, the authors develop numerical algorithms to solve both the timing and the scheduling problems. Their techniques rely heavily on gradient-like methods. However, the latter problem is much more complex because of its discrete nature: indeed the procedure needs to account for both continuous and discrete control variables, and can therefore be seen as a combinatorial optimization problem. Note that the paper \cite{Ali2014} deals with dwell time constraints. It consists in imposing a threshold $\eta$ between two consecutive switching times which is the constraint we introduced at \eqref{eq_gap}. Let us also mention other techniques to solve scheduling optimization problems, like zoning algorithms \cite{Shaikh2005}, or relaxation methods, where discrete variables are temporarly relaxed into continuous variables \cite{Bengea2005}.

\section{Numerical results}
\label{sec4}

In order to illustrate the results of Sections \ref{sec2} and \ref{sec3}, we consider the problem of the attitude control of a rigid body. Let $\omega = (\omega_1,\omega_2, \omega_3)$ be the angular velocity of the body with respect to a frame fixed on the body. Introducing the inertia matrix $I$, the Euler's equation for a rigid body, subjected to torques $(b^1, \ldots, b^m)$, writes:
\begin{equation*}
I \dot{\omega} = I \omega \wedge \omega + \sum_{k=1}^m {b^k}.
\end{equation*}

 In the case when the axes of the body frame are the axes of inertia of the body, the matrix $I$ is diagonal: $I = \diag(I_1,I_2,I_3)$. The controlled Euler's equations can then be reduced to
 \begin{equation*}
\dot{\omega}(t) = f\left(\omega(t),u(t)\right),
\end{equation*}
where for $1 \leq k \leq m$, $u_k(t) \in \{0,1\}$ almost everywhere, and the function $f$ describing the dynamics writes:
\begin{equation}
f(\omega_1,\omega_2,\omega_3,u_1,u_2,u_3,u_4) = \left\{
\begin{array}{l}
\alpha_1 \omega_2 \omega_3 + \sum^m_{k=1}{b^k_1 u_k} \\
\alpha_2 \omega_1 \omega_3 + \sum^m_{k=1}{b^k_2 u_k} ~,\\
\alpha_3 \omega_1 \omega_2 + \sum^m_{k=1}{b^k_3 u_k} \\
\end{array}\right.
\label{eq_euler}
\end{equation}
with $\alpha_1 = (I_2 - I_3)/I_1$, $\alpha_2 = (I_3 - I_1)/I_2$ and $\alpha_3 = (I_2 - I_1)/I_3$. This is with a slight abuse in the notations, because we still denote by $b^k$ the normalized vector $(b^k_1/I_1,b^k_2/I_2, b^k_3/I_3)$.

The controllability of such a system has been studied in \cite{bonnard2006mecanique}. Let us mention here the papers \cite{Krstic1999,Outbib1992,Windeknecht1963}, that implement, in the special case of the stabilization of a rigid spacecraft, methods to stabilize the spacecraft towards the point $(0,0,0)$, but once again, the controls used are not bang-bang. Note that \eqref{eq_euler} is a control-affine system, and therefore, Remark \ref{rq_affine} applies.

In the following, we consider the numerical values $\alpha_1 = 1$, $\alpha_2 = -1$, $\alpha_3 = 1$, $b^1 = [2,1,0.3]$, $b^2 = [-2,-1,-0.3]$, $b^3 = [0,0,1]$ and $b^4 = [0,0,-1]$, and initial and final conditions $x_0 = (0,0,0) $ and $x_f = (0.4,-0.3,0.4)$.

We start by building an optimal trajectory for the $L^1$ cost $\int^{t_f}_0{\sum_{j=1}^4 {|u_j(t)|dt}} + t_f$ (the presence of $t_f$ ensures us not to obtain a trajectory with infinite final time). The resolution of such a problem with a $L^1$ cost can be numerically challenging. Numerical methods in optimal control are often categorized in two categories: direct methods and indirect methods. Whereas direct methods consist in a total discretization of the state and control spaces, indirect methods exploit Pontryagin maximum principle. (see \cite{Trelat2012} for a survey on numerical methods in optimal control). The aim of the following subsection is to explain briefly the principle of a continuation method.

\subsection{Computing the nominal trajectory}

The nominal trajectory, optimal for the $L^1$ cost, is computed with a continuation procedure. The idea of such a  procedure is to solve first an ``easier'' problem, and deform it step by step to solve the targeted problem. We introduce the continuation parameter $\lambda \in [0,1]$, and we consider the optimal control problem $(\mathcal{P}_{\lambda})$ of steering the system \eqref{eq_euler} from $x_0$ to $x_f$, by minimizing the cost 
\begin{equation*}
\lambda \int^{t_f}_0{\sum^4_{i=1}{|u_j(t)|^2\, dt}} + (1-\lambda) \int^{t_f}_0{\sum^4_{i=1}{|u_j(t)|\, dt}} + t_f.
\end{equation*}
When $\lambda = 0$, we recognize our problem.
For some $\lambda \in [0,1]$, solving problem $(\mathcal{P}_{\lambda})$ is done by finding the zeros of a shooting function that results from the application of Pontryagin maximum principle. Solving a shooting problem is done with Newton like methods. Such methods are highly sensitive to their initialization, that can be very difficult, especially in the case of the minimization of the $L^1$ norm $\int^{t_f}_0{|u(t)|dt}$. The continuation procedure is introduced to overcome this difficulty.

For $\lambda=1$, the cost is stricly convex in the controls, and writes 
\[
\int^{t_f}_0{\sum^4_{i=1}{|u_j(t)|^2\, dt}} + t_f,
\]
for which the initialization of the induced shooting method is much easier. Therefore, we solve a sequence of optimal control problems, for values of $\lambda$ decreasing from 1 to 0. The result of the shooting problem for some $\lambda \in ]0,1]$ serves as the initialization of another problem with $\lambda' < \lambda$.

\subsection{Robustifying the nominal trajectory}

From this $L^1$ - minimal trajectory, represented on Figure \ref{fig_add_needles}, with three switching times that we denote $(t_1,t_2,t_3)$ we build a new trajectory by solving the problem \eqref{prob_add_needles} with 3 needles (i.e., $l=3$), $\lambda_1 = \lambda_2 = 1$, and taking $\eta = 0.05$ in Equation \eqref{eq_gap}. As explained in Remark \ref{rq_leastsquares}, we see that it is worthwile to have the additional switching times available as long as possible. That is, we force the additional switchings to occur after $t_3$. Keeping in mind Equation \eqref{eq_gap}, this constraint can be written:
\[
\begin{array}{ccc}
t_{i+1} - t_i \geq \eta~~ (\forall i \in \llbracket1,3\rrbracket) ,& s_{1}-t_{3} \geq \eta ,& s_{i+1} - s_i \geq \eta~~ (\forall i \in \llbracket1,6\rrbracket).
\end{array}
\]
We find that the optimal triplet is $(j_1,j_2,j_3) = (1,4,2)$, for which we have $C = 0.77$ and $C_{r} = 2.22$. We found this optimal triplet by exploring the $4^3 = 64$ possibilities. We then used the heuristic that this solution would make a good choice to start looking for the solution with 4 needles (as it would have been to costly to examine the $4^4 = 256$ possibilities). However we could not make the cost dicrease significantly (the best cost we found was $C_r = 2.07$). This heuristic is very similar to what is used in Branch and Bound methods. Besides, as an element of comparison, the optimal couple when adding only two needles is $(j_1,j_2) = (1,4)$, for which $C_r = 4.25$, and the optimal solution when adding only on needle is $j_1 = 2$, for which $C_r = 30.28$. Thus, we notice a substantial improvement when increasing the number of needles from 1 to 2 and from 2 to 3, whereas it seems less profitable to add a fourth one. We therefore stopped at 3 needles.
The controls are displayed on Figure \ref{fig_add_needles}, and the components 1, 2 and 4, on which needles have been added, are represented in red.

\begin{figure}[h!]
\begin{center}
  \subfloat[Controls for the minimal $L^1$ trajectory.\ $C =  0.49$]{\includegraphics[scale=0.80]{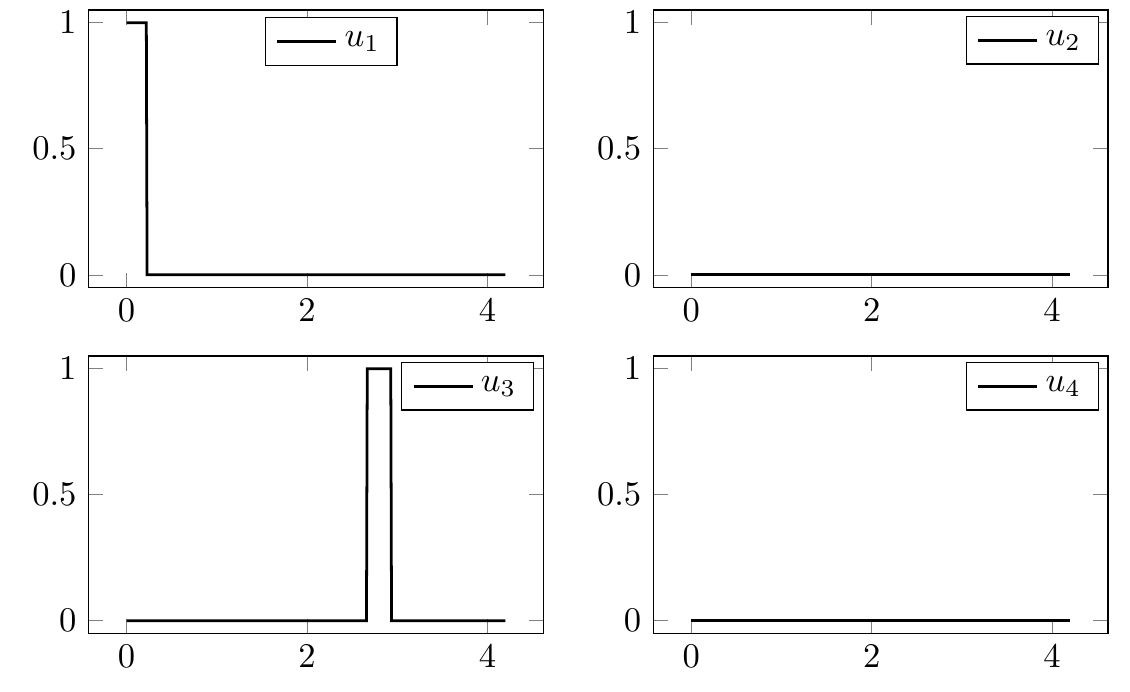}}
 \end{center}

  \begin{center}
  \subfloat[Controls with three needles. $C = 0.77$, $C_r = 2.22$]{\includegraphics[scale=0.80]{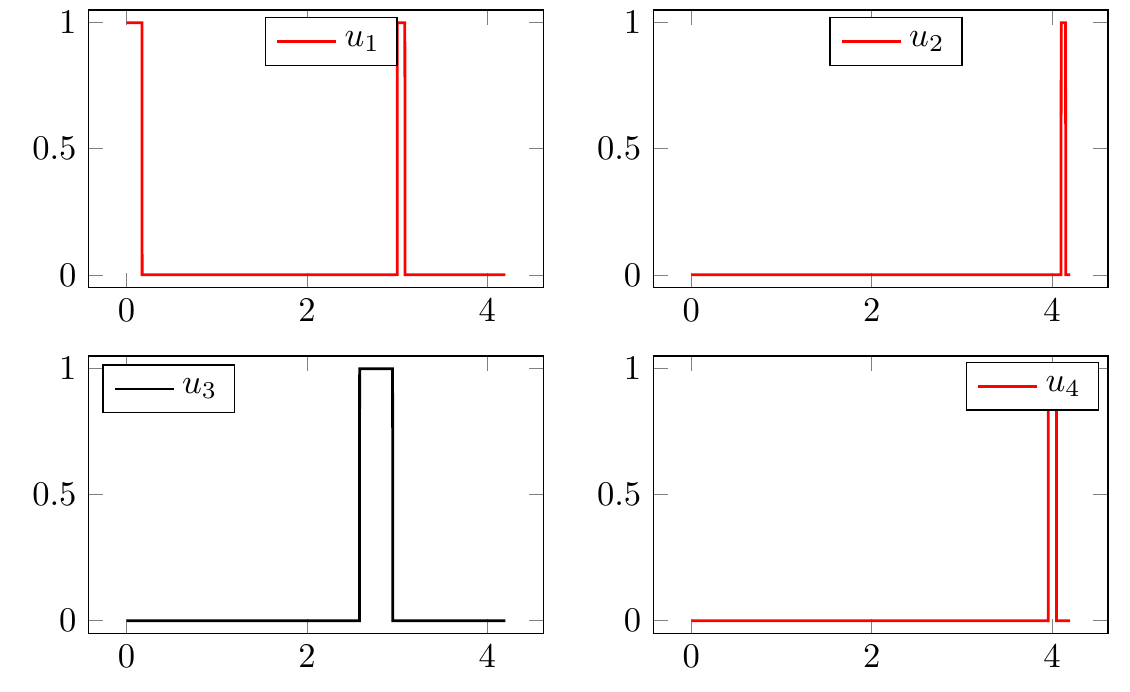}}
  \end{center}
  \caption{Improving the robustness of a trajectory adding needles.}
  \label{fig_add_needles}
\end{figure}

In order to represent perturbations, we consider that the principal moments of inertia can vary, causing the coefficients $\alpha_1$, $\alpha_2$ and $\alpha_3$ to vary. Thus we consider the perturbed dynamics
\begin{equation}
f_{per}(t,\omega_1,\omega_2,\omega_3,u_1,u_2,u_3,u_4) = \left\{
\begin{array}{l}
\alpha^{per,\varepsilon}_1(t) \omega_2 \omega_3 + \sum^m_{k=1}{b^k_1 u_k} \\
\alpha^{per,\varepsilon}_2(t) \omega_1 \omega_3 + \sum^m_{k=1}{b^k_2 u_k}~, \\
\alpha^{per,\varepsilon}_3(t) \omega_1 \omega_2 + \sum^m_{k=1}{b^k_3 u_k} \\
\end{array}\right.
\label{eq_euler_per}
\end{equation} 
so that $\varepsilon$ models the size of the perturbation. More precisely, we take $\alpha_i^{per,\varepsilon}(t) = \alpha_i + \varepsilon h_i(t)$, where $h_i(\cdot)$ is some periodic function satisfying $\left\| h_i \right\|_{\infty} \leq 1$ (note that the exact expression of $h_i$ is not relevant here, as it is supposed to model any perturbation of the $\alpha_i$). We denote by $x_{per}$ the solution of the Cauchy problem 
\begin{align*}
\dot{x}(t) &= f_{per}(t,x(t),u(t)), \\
x(0) &= x_0.
\end{align*}

We denote by $x_{cor}$ the corrected trajectory computed with our algorithm.
We show, on Figure \ref{fig_tracking}, the three trajectories, for $\varepsilon = 0.78$ and a cost $C_{r} = 2.22$. We can see the perturbed trajectory $x_{per}$ drifting away from the reference trajectory $x_{ref}$ and away from the final point $x_f$, whereas the corrected trajectory $x_{cor}$ eventually reaches a point very close to $x_f$. Actually, for the trajectories represented on Figure \ref{fig_tracking}, we have that $\left\| x_{cor}(t_f) - x_f \right\| / \left\|x_f\right\| = 5.5 \times 10^{-3}$, whereas $\left\| x_{per}(t_f) - x_f \right\| / \left\|x_f\right\| = 1.3 \times 10^{-1}$. Our algorithm has indeed been able to adjust the perturbed trajectory back towards $x_f$.

One may wonder how this method behaves with respect to the choice of $\varepsilon$. As explained in Remark \ref{rq_interchanging}, we stop if two switching times are interchanged, that is, if $\delta T$ is too big, as the initial vector of switching times satisfies a gap property \eqref{eq_gap}. Actually, this is not strictly true, as we could have a ``big'' correction that does not change the ascending order of the switching times, for instance if we shift all the switching times in the same direction. However, we experimentally notice that the cost $C_{r}$ \emph{has an impact on the size of the perturbation we are able to absorb}. 

We build several trajectories, for which we apply our algorithm for increasing values of $\varepsilon$, until the algorithm fails as explained in Remark \ref{rq_interchanging}, for some $\varepsilon_{\max}$. We plot on Figure \ref{fig_size_perturbations} the value of $\varepsilon_{\max}$ with respect to the cost $C_{r}$ (that is, for a given cost $C_r$, $\varepsilon_{max}$ is the smallest value for which there is an interchanging of switching times). Even if the curve is not decreasing (for the reason explained above), we can see that \emph{having a low cost $C_{r}$ enables us to absorb bigger perturbations}.

\begin{figure}[!h]
\begin{center}
\includegraphics{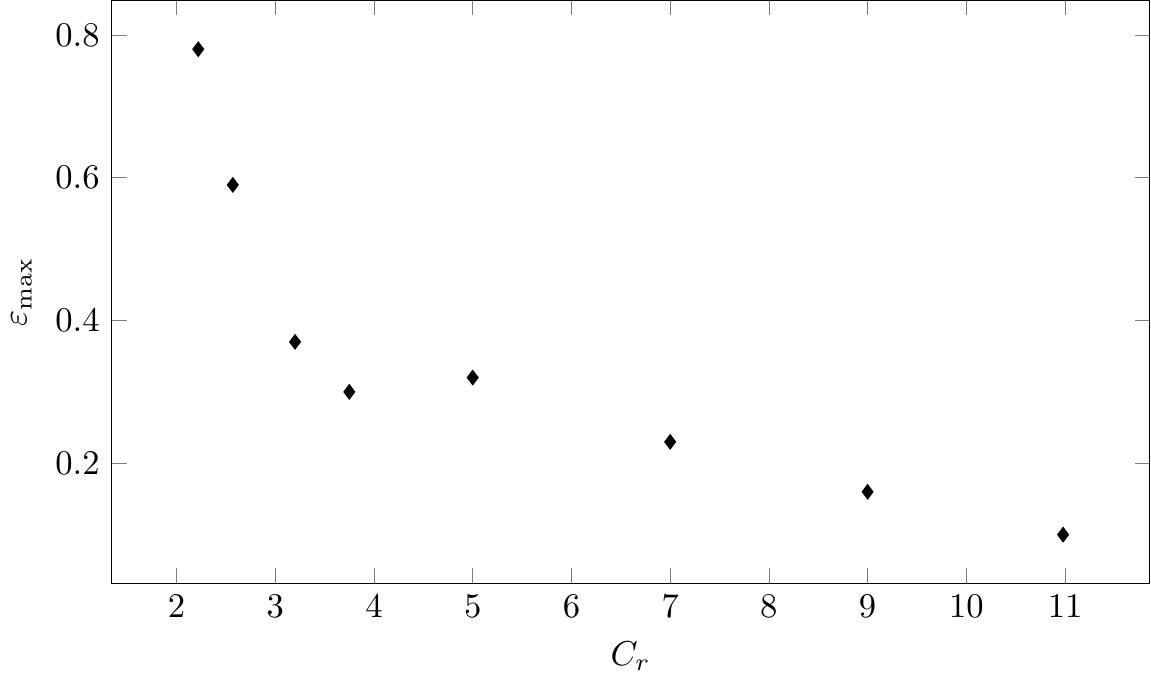}
\caption{Size of the maximal perturbation absorbed with respect to the robustness of a trajectory}
\label{fig_size_perturbations}
\end{center}
\end{figure}

\begin{figure}
\vspace{-1cm}
\includegraphics{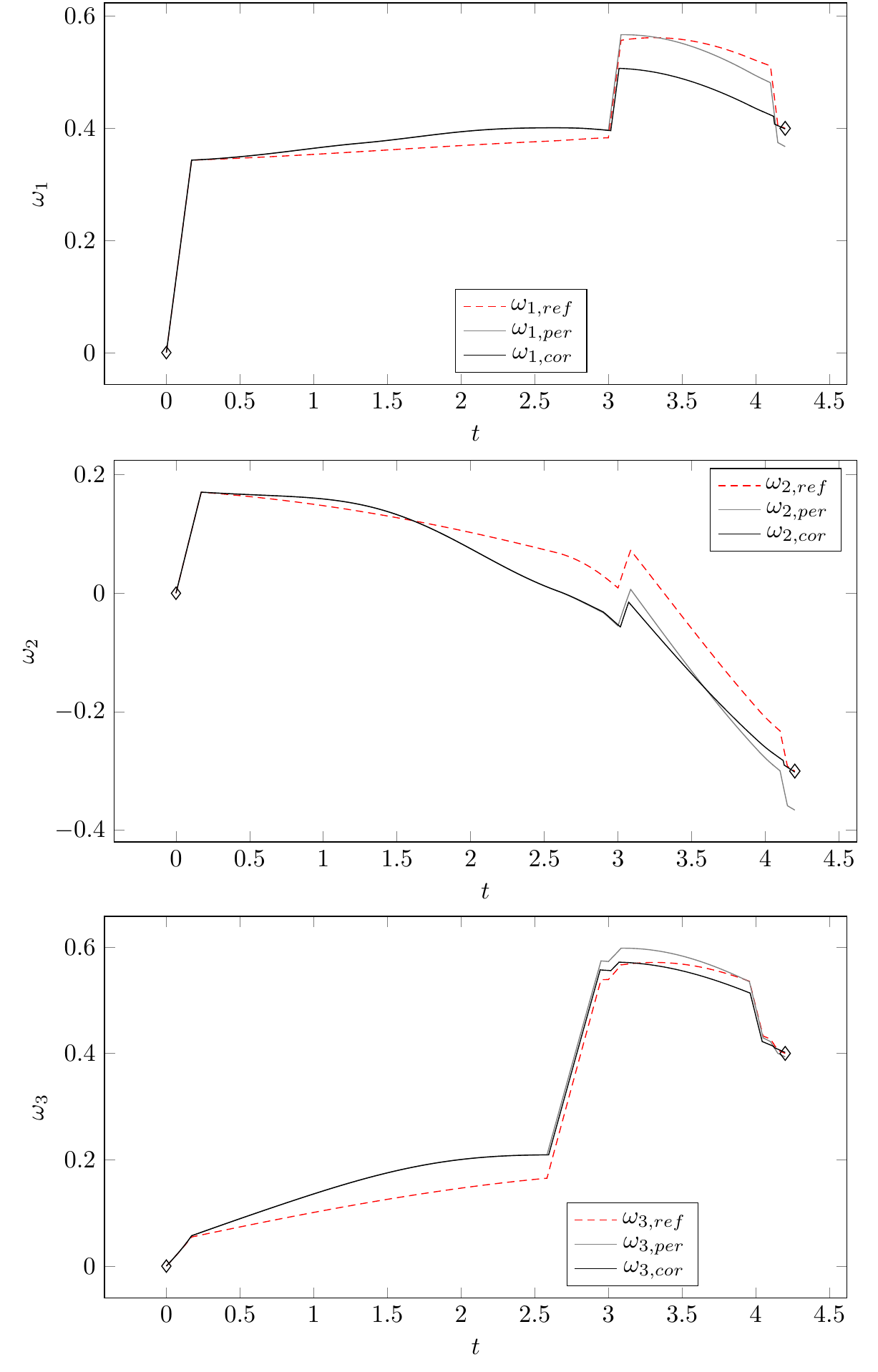}
\caption{Reference, perturbed and corrected trajectories for $\varepsilon = 0.78$, $C_{r} = 2.22$.}
\label{fig_tracking}
\end{figure}

On Figure \ref{fig_tracking_results}, we show the relative error $\|x(t_f)-x_f\|/\|x_f\|$ for the perturbed $x_{per}$ and corrected $x_{cor}$ trajectories, for several values of $\varepsilon$. As we apply order one corrections, we see that our method shows better results for small values of $\varepsilon$, but also gives very satisfactory results for larger values of $\varepsilon$.

\begin{figure}
\vspace{-1cm}
\includegraphics{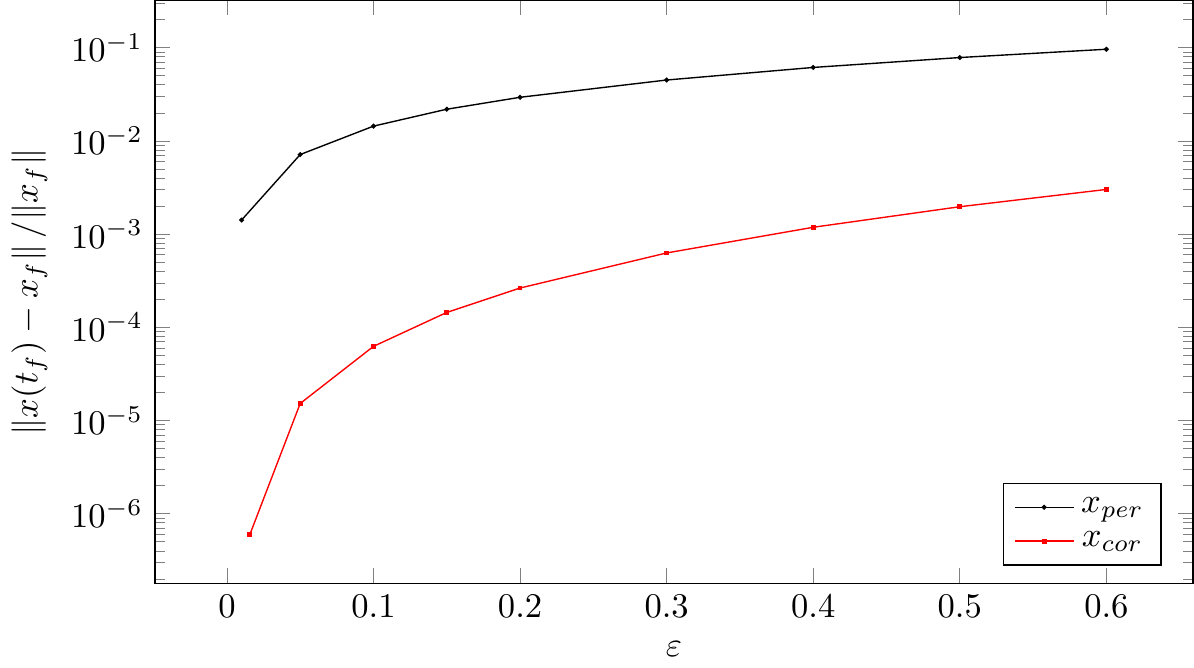}
\caption{Tracking results for several values of $\varepsilon$.}
\label{fig_tracking_results}
\end{figure}

\section{Conclusion}

Starting with the expansion of the end-point mapping with respect to a needle like variation, we have shown in this paper how redundant switching times can be added in order to make a control more robust, for general control systems of the form $\dot{x}(t) = f(t,x(t),u(t))$. Those additional switching times can be seen as extra degrees of freedom meant to help us absorb perturbations. A potential application is to start from a bang-bang solution of an optimal control problem, that is usually not robust, and make it more robust. Then the gain in robustness compensates for the loss in optimality.

In the presence of a perturbation $\delta x$, the correction to apply to the switching times is the solution of an equation $dE \cdot \delta \mathcal{T} = \delta x$. It is natural to try to solve this equation while shifting the switching times as little as possible. The least-squares problem formulation is then the appropriate setting to find the solution of minimal (euclidian) norm of the previous equation, and it is given by $\delta \mathcal{T} = dE^{\dagger} \cdot \delta x$, for which we have the norm estimation $\left\| \delta \mathcal{T} \right\|_2 \leq \left\| \delta x \right\|_2/\sigma_{min}$. This enabled us to identify the measure for robustness:
\begin{equation*}
\int{\frac{1}{\sigma_{min}(t)^2}\, dt}.
\end{equation*}

The numerical example studied in Section \ref{sec4} is academic, and was used to legitimize the theoretical ideas explained previously. In a future work, we aim at applying the method to the complete (and more complex) attitude control system of a three-dimensional rigid body, for which we wish to control the angular velocity, as well as the orientation with respect to a fixed reference frame. To the three velocity variables will be added three angles to parametrize the orientation of the body. Thus, a challenge will come from the dimension of the state space (6), as well as the potentially bigger number of needle-like variations required to robustify a trajectory.

\appendix
\section{Proof of proposition \ref{prop_differentiability_epm}}
  
In order to prove the differentiability of the end-point mapping, we start with the differentiability with respect to one component. The proof relies heavily on the expansion \eqref{eq_var}, that we recall first.

\begin{lemma}
\label{lemme_aiguilles}
Let $t_1 \in [0,t_f[$, and let $u_{\pi_1}(\cdot)$ be a needle-like variation of $u(\cdot)$, with $\pi_1=(t_1,\delta t_1,u_1)$. Then
\begin{equation*}
x_{\pi_1}(t_f) =  {x}(t_f) + \left| \delta t_1 \right| v_{\pi_1}(t_f) + \mathrm{o}(\delta t_1),
\end{equation*}
where $v_{\pi_1}(\cdot)$ is the solution of a Cauchy problem on $[t_1,t_f]$ 
\begin{align*}
\dot{v}_{\pi_1}(t) & = \frac{\partial f}{\partial x}(t, {x}(t), {u}(t))v_{\pi_1}(t),\\
v_{\pi_1}(t_1) & =  f(t_1, {x}(t_1),u_1) - f(t_1, {x}(t_1), {u}(t_1)).
\end{align*}
\end{lemma}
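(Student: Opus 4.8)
The plan is to follow the classical three-interval argument from the proof of the Pontryagin maximum principle, handling $\delta t_1>0$ and $\delta t_1<0$ in parallel. Throughout, I take $t_1$ to be a Lebesgue point of $s\mapsto f(s,x(s),u(s))$, which excludes only the finitely many switching times under $(H_1)$--$(H_2)$ and holds for a.e.\ $t_1$. Denote by $[t_*,t^*]$ the needle interval (of length $|\delta t_1|$), where $t_*$ is its left endpoint and $t^*$ its right endpoint.

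On $[0,t_*]$ the controls $u$ and $u_{\pi_1}$ coincide, so by uniqueness of solutions $x_{\pi_1}\equiv x$ there; in particular $x_{\pi_1}(t_*)=x(t_*)$. Next I analyze the jump across the needle, where $u_{\pi_1}\equiv u_1$. Integrating the two dynamics over $[t_*,t^*]$, using continuity of $f$, the Lebesgue-point property, and the fact that $x_{\pi_1}$ remains within $O(\delta t_1)$ of $x(t_1)$ on this short interval, a first-order Taylor expansion in $|\delta t_1|$ gives
$$x_{\pi_1}(t^*)-x(t^*) = |\delta t_1|\,\bigl(f(t_1,x(t_1),u_1)-f(t_1,x(t_1),u(t_1))\bigr) + o(\delta t_1) = |\delta t_1|\, v_{\pi_1}(t_1)+o(\delta t_1).$$
A direct check of the two sign cases shows both yield the factor $|\delta t_1|$ (not $\delta t_1$), which is precisely why only the absolute value survives in the statement.

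On the final interval $[t^*,t_f]$ both trajectories again solve $\dot y=f(t,y,u(t))$, now with initial data at $t^*$ differing by $w_0:=|\delta t_1|\, v_{\pi_1}(t_1)+o(\delta t_1)$. Setting $w:=x_{\pi_1}-x$ and linearizing, $f(t,x_{\pi_1},u(t))-f(t,x,u(t))=\partial_x f(t,x(t),u(t))\,w+o(\|w\|)$, I invoke Gronwall's lemma and continuous dependence on initial conditions to propagate $w$ by the resolvent $M(\cdot,\cdot)$ of the linearized flow: $w(t_f)=M(t_f,t_1)w_0+o(\delta t_1)$. Since $v_{\pi_1}(t)=M(t,t_1)v_{\pi_1}(t_1)$ and $M(t_f,t^*)=M(t_f,t_1)+O(\delta t_1)$, this gives $x_{\pi_1}(t_f)-x(t_f)=|\delta t_1|\,v_{\pi_1}(t_f)+o(\delta t_1)$, as claimed.

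The main obstacle is making the three $o(\delta t_1)$ estimates rigorous and uniform: one must bound $\|x_{\pi_1}-x\|$ on the needle interval to legitimize the Taylor expansion of $f$ there, and then show that propagation through the linearized flow over $[t^*,t_f]$ incurs only an $o(\delta t_1)$ error. Both follow from the smoothness of $f$ together with Gronwall's lemma and standard continuous-dependence estimates; the sole measure-theoretic input is the Lebesgue-point choice of $t_1$, ensuring that the nominal increment over the needle equals $|\delta t_1|\,f(t_1,x(t_1),u(t_1))+o(\delta t_1)$.
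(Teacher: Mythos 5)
Your proof is correct, but it cannot be compared step-by-step with the paper's, for the simple reason that the paper does not prove Lemma \ref{lemme_aiguilles}: the lemma is stated in the appendix as a recall of the classical needle-variation expansion \eqref{eq_var}, with the proof deferred to the cited references on the Pontryagin maximum principle, and the appendix only proves the subsequent proposition (differentiability with respect to one switching time) taking the lemma as given. What you wrote is precisely the standard argument behind those citations: the two trajectories coincide up to the needle; a first-order expansion of the two integral increments across the needle produces the offset $|\delta t_1|\,v_{\pi_1}(t_1)+o(\delta t_1)$, with the factor $|\delta t_1|$ in both sign cases (which is exactly why the absolute value, and not $\delta t_1$, appears in the statement); and this offset is transported to $t_f$ by the linearized flow, with smoothness of $f$, Gronwall and $C^1$-dependence on initial data controlling the remainders. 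So your proof buys self-containedness where the paper buys brevity by citation. One further point in your favour: your restriction to Lebesgue points of $s\mapsto f(s,x(s),u(s))$ is not a loss of generality but a necessary precision. As stated, with a single value $u(t_1)$ and both signs of $\delta t_1$ allowed, the lemma can only hold where $u$ is continuous; at a switching time the two signs of $\delta t_1$ see the two one-sided limits $u(t_1^{\pm})$, and this is exactly how the paper itself applies the lemma in the appendix when it places needles at switching times to prove Proposition \ref{prop_differentiability_epm}: each sign of $\delta t_1$ is paired with the corresponding one-sided limit, and the two resulting variation vectors are shown to be opposite. Under $(H_1)$--$(H_2)$ your restriction excludes only the finitely many switching times, as you say, so the two statements fit together consistently.
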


\begin{proposition}
We denote by $u$ the control $(t_1, \ldots, t_N,t_f)$ and $x(\cdot)$ the associated trajectory of the control system. Let $\delta t_1 \in \R$ be small enough. Then
\begin{equation*}
E(t_1+\delta t_1, t_2, \ldots, t_N, t_f) = E(t_1, \ldots, t_N, t_f) + \delta t_1 \cdot v_1(t_f) + o(\delta t_1),
\end{equation*}
where $v_1(\cdot)$ is the solution of the Cauchy problem on $[t_1,t_f]$:
\begin{align*}
\dot{v}_1(t) &= \frac{\partial f}{\partial x}(t,x(t),u(t))v_i(t), \\
v_1(t_1) &= \left\{
\begin{tabular}{rl}
$f(t_1,x(t_1),(\ldots, a_{i_1}, \ldots)) - f(t_1,x(t_1),u(t_1^+))$ & if $u_{i_1}$ switches from $a_{i_1}$ to $b_{i_1}$. \\
$f(t_1,x(t_1),(\ldots, b_{i_1}, \ldots)) - f(t_1,x(t_1),u(t_1^-))$ &  if $u_{i_1}$ switches from $b_{i_1}$ to $a_{i_1}$.
\end{tabular}
\right.
\end{align*}
\end{proposition}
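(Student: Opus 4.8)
The plan is to reduce the perturbation of a single switching time to a needle-like variation of the control and then invoke Lemma~\ref{lemme_aiguilles}. The guiding observation is that moving $t_1$ by a small amount $\delta t_1$ alters $u$ only on a short interval adjacent to $t_1$, and this alteration \emph{is} a spike variation. The whole proof then amounts to identifying the correct needle---its value and the sign of its duration---in the two cases $\delta t_1>0$ and $\delta t_1<0$, and carefully tracking signs so that the absolute value in Lemma~\ref{lemme_aiguilles} disappears.

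Concretely, suppose first that $u_{i_1}$ switches from $a_{i_1}$ to $b_{i_1}$ at $t_1$, so that $u(t_1^-)=(\ldots,a_{i_1},\ldots)$ and $u(t_1^+)=(\ldots,b_{i_1},\ldots)$. For $\delta t_1>0$, the shifted control $(t_1+\delta t_1,t_2,\ldots)$ differs from $u$ only on $[t_1,t_1+\delta t_1]$, where the $i_1$-th component is held at $a_{i_1}$ instead of $b_{i_1}$; this is exactly the needle $\pi_1=(t_1,\delta t_1,(\ldots,a_{i_1},\ldots))$, whose support carries the original value $u(t_1^+)$. Lemma~\ref{lemme_aiguilles} then gives $x_{\pi_1}(t_f)=x(t_f)+|\delta t_1|\,v_{\pi_1}(t_f)+\mathrm{o}(\delta t_1)$ with $v_{\pi_1}(t_1)=f(t_1,x(t_1),(\ldots,a_{i_1},\ldots))-f(t_1,x(t_1),u(t_1^+))$, which is precisely $v_1(t_1)$; since $|\delta t_1|=\delta t_1$, this yields the asserted expansion for positive $\delta t_1$.

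For $\delta t_1<0$ the switch occurs earlier, and the control now differs from $u$ on $[t_1+\delta t_1,t_1]$, where the $i_1$-th component is set to $b_{i_1}$ instead of $a_{i_1}$; the relevant needle is $\pi_1=(t_1,\delta t_1,(\ldots,b_{i_1},\ldots))$, whose support carries the original value $u(t_1^-)$. Its variation vector $\tilde v$ solves the same linearized Cauchy problem $\dot{\tilde v}=\tfrac{\partial f}{\partial x}(t,x(t),u(t))\tilde v$, but with initial condition $\tilde v(t_1)=f(t_1,x(t_1),(\ldots,b_{i_1},\ldots))-f(t_1,x(t_1),u(t_1^-))=-v_1(t_1)$, whence $\tilde v(t_f)=-v_1(t_f)$ by linearity of the state-transition flow. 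Lemma~\ref{lemme_aiguilles} gives $x_{\pi_1}(t_f)=x(t_f)+|\delta t_1|\,\tilde v(t_f)+\mathrm{o}(\delta t_1)$, and since $|\delta t_1|=-\delta t_1$ the two sign reversals cancel to produce again $x(t_f)+\delta t_1\,v_1(t_f)+\mathrm{o}(\delta t_1)$. The two one-sided expansions therefore coincide, which is exactly the signed, absolute-value-free first-order expansion of the statement, and it establishes differentiability in $t_1$. The case where $u_{i_1}$ switches from $b_{i_1}$ to $a_{i_1}$ is handled identically after exchanging the roles of $a_{i_1}$ and $b_{i_1}$.

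The only point demanding care---rather than any genuine difficulty---is the sign bookkeeping in the second case: one must verify that the negative duration of the needle and the reversal of the variation-vector initial condition compensate, so that a single formula governs both signs of $\delta t_1$. A secondary, routine check is that taking $|\delta t_1|$ small enough keeps $t_1+\delta t_1\in(0,t_2)$, so the perturbed control remains bang-bang and admissible and Lemma~\ref{lemme_aiguilles} applies.
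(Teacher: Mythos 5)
Your proof is correct and follows essentially the same route as the paper's: in both cases the shift of $t_1$ is identified with a needle-like variation (value $a_{i_1}$ for $\delta t_1>0$, value $b_{i_1}$ for $\delta t_1<0$), Lemma~\ref{lemme_aiguilles} is applied, and for negative $\delta t_1$ the reversed initial condition of the variation vector combined with $|\delta t_1|=-\delta t_1$ cancels the signs, exactly as in the paper's argument via $w_1=-v_1$. Nothing further is needed.
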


\begin{proof}

\begin{figure}[h!]
  \hspace{0.3cm}
  \subfloat[$\delta t_1 > 0$.]{\includegraphics[scale = 0.9]{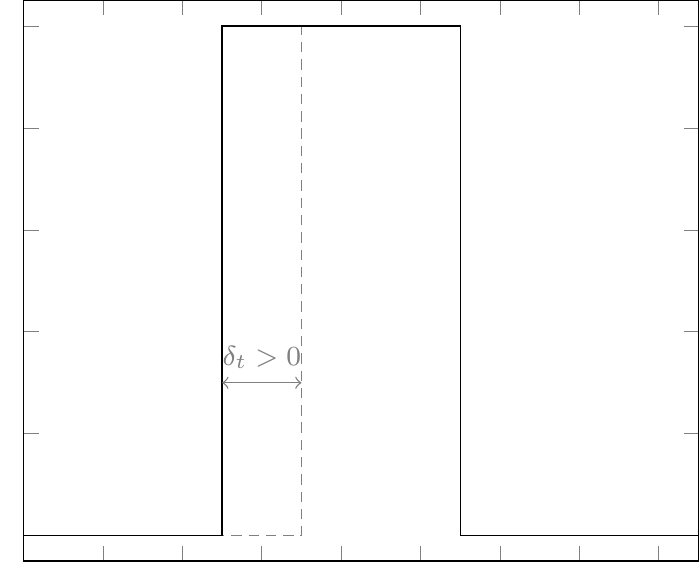}}
  \hspace{1.cm}
  \subfloat[$\delta t_1 < 0$.]{\includegraphics[scale=0.9]{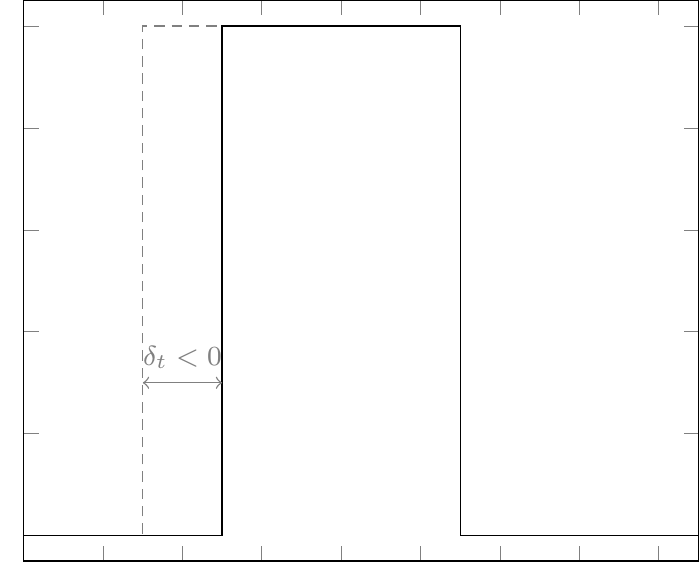}}
  \caption{Shifting an opening time is equivalent to add a needle.}
  \label{aiguille_demo}
\end{figure}
Assume that at time $t_1$ the control $u_{i_1}$ switches from $a_{i_1}$ to $b_{i_1}$, and that $\delta t_1 > 0$. Let us define the needle-like variation $\pi = (t_1,\delta t_1, a_{i_1})$ for the $i_1$-th component of the control. Then, the control $u_{\pi}$ is represented by the vector $(t_1+\delta t_1, \ldots, t_N, t_f)$ (figure \ref{aiguille_demo}): adding the needle-like variation $\pi$ to the $i_1$-th component, with value $a_{i_1}$ and length $\delta t_1$ is equivalent to shifting the opening time to $t_1+\delta t_1$. Thus, we have that $u(t_1^+)_{i_1} = b_{i_1}$ and $u_{\pi}(t_1^+)_{i_1} = a_{i_1}$. Hence, we obtain that, according to lemma \ref{lemme_aiguilles} 
\begin{equation}
\label{var_pos}
x_{\pi}(t_f) = x(t_f) + \delta t_1 \cdot v_1(t_f) + o(\delta t_1),
\end{equation}
where $v_1(\cdot)$ is the solution of the Cauchy problem:
\begin{align*}
\dot{v}_1(t) &= \frac{\partial f}{\partial x}(t,x(t),u(t))v_1(t), \\
v_1(t_1) 	&= f(t_1,x(t_1),u_{\pi}(t_1^+)) - f(t_1,x(t_1),u(t_1^+)) \\
		&= f(t_1,x(t_1), (\ldots, a_{i_1}, \ldots)) - f(t_1,x(t_1), (\ldots, b_{i_1}, \ldots)).
\end{align*}
(Between $u_{\pi}(t_1^+)$ and $u(t_1^+)$, only the $i_1$-th component differs.)

If $\delta t_1 <0$, define the variation $\pi = (t_1,\delta t_1, 1)$ for the $i_1$-th component of the control. Then again, the control $u_{\pi}$ is represented by the vector $(t_1+\delta t_1, \ldots, t_N, t_f)$ (figure \ref{aiguille_demo}). Thus, we have that $u(t_1^-)_j = a_{i_1}$ and $u_{\pi}(t_1^-)_{i_1} = 1$. Thanks to lemma \ref{lemme_aiguilles}, we obtain that 
\begin{equation}
\label{var_neg}
x_{\pi}(t_f) = x(t_f)  - \delta t_1 \cdot w_1(t_f) + o(\delta t_1),
\end{equation}
where $w_1(\cdot)$ is the solution of the Cauchy problem:
\begin{align*}
\dot{w}_1(t) 	&= \frac{\partial f}{\partial x}(t,x(t),u(t))w_1(t), \\
w_1(t_1) 		&= f(t_1,x(t_1),u_{\pi}(t_1^-)) - f(t_1,x(t_1),u(t_1^-)) \\
			&= f(t_1,x(t_1), (\ldots, b_{i_1}, \ldots)) - f(t_1,x(t_1), (\ldots, a_{i_1}, \ldots))  \\
			&= -v_1(t_1).
\end{align*}
Thus, by uniqueness we have $w_1 = -v_1$, and from \eqref{var_pos} and \eqref{var_neg}, we obtain:
\begin{equation*}
x_{\pi}(t_f) = x(t_f) + \delta t_1 \cdot v_1(t_f) + o(\delta t_1).
\end{equation*}

We can proceed the exact same way if  at $t_1$, the control $u_{i_1}$ switches from $b_{i_1}$ to $a_{i_1}$
\end{proof}

The general result at proposition \ref{prop_differentiability_epm} follows by an immediate iteration.

\renewcommand{\abstractname}{Acknowledgements}
\begin{abstract}
 This study has been performed in the frame of the CNES Launchers Research \& Technology program.
\end{abstract}

\bibliographystyle{unsrt}
\bibliography{biblio_cra.bib}

\end{document}